\newtheorem{definition}{Definition}[section]
\newtheorem{theorem}[definition]{Theorem}
\newtheorem{lemma}[definition]{Lemma}
\newtheorem{corollary}[definition]{Corollary}
\newtheorem{proposition}[definition]{Proposition}
\theoremstyle{definition}
\newtheorem{remark}[definition]{Remark}
\newtheorem{example}[definition]{Example}
\newcommand{\la}{\left\langle}
\newcommand{\ra}{\right\rangle}
\newcommand{\clR}{\mathcal{R}}
\newcommand{\clL}{\mathcal{L}}
\newcommand{\bbN}{\mathbb{N}}
\newcommand{\bbC}{\mathbb{C}}
\newcommand{\clP}{\mathcal{P}}
\newcommand{\di}{\int^\oplus}
\newcommand{\dmu}{d\mu (p)}
\newcommand{\acomm}{{\pi(A)}^{\prime}}
\newcommand{\aocom}{{\pi_\omega(A)}^{\prime}}
\newcommand{\adcom}{{\pi(A)}^{\prime \prime}}
\newcommand{\cfinfty}{{C^*(\mathbb{F}_\infty)}}
\begin{document}
	
%%%%%%%%%%%%%%%%%%%%%%%%%%%%%%%%%%%%%%%%%%%%%

\title[Decomposable Weak Expectations]{Decomposable Weak Expectations}

\author[A.~Bhattacharya]{Angshuman Bhattacharya}
\address{Department of Mathematics, IISER Bhopal, MP 462066, India}
\email{angshu@iiserb.ac.in}

\author[C. J.~Kulkarni]{Chaitanya J. Kulkarni}
\address{Department of Mathematics, IISER Bhopal, MP 462066, India}
\email{kulkarni18@iiserb.ac.in}

\keywords{Weak Expectation Property, decomposition of representations, extremal decompositions}
\subjclass[2010]{Primary (2020) 46L45, 47B65, 47C15; Secondary (2020) 46L05, 47L10, 47B02}

%%%%%%%%%%%%%%%%%%%%%%%%%%%%%%%%%%%%%%%%%%%%%%%

\begin{abstract}
In this article we define a special class of weak expectations for a representation of a separable unital C*-algebra, called decomposable weak expectation. We give necessary and sufficient conditions for such kind of weak expectations to exist for a given representation. Then we define decomposable measures on the state space of a C*-algebra and show that the GNS representation of a state admits a decomposable weak expectation if and only if there is a decomposable measure on the state space. Further we give an example of a decomposable weak expectation. 
\end{abstract}

\maketitle

%%%%%%%%%%%%%%%%%%%%%%%%%%%%%%%%%%%%%%%%%%%%%%%	

All C*-algebras and representations considered in this article are unital. 	

\section{Introduction}

A \textit{weak expectation} of a representation $\pi$ of a C*-algebra $A$ on a Hilbert space $H$ is a unital completely positive map $$\Phi: B(H) \rightarrow {\pi(A)}^{\prime\prime}$$ such that $\Phi(\pi (a))= \pi (a)$ for all $a\in A$ where ${\pi(A)}^{\prime\prime}$ denotes the double commutant of $\pi(A)$ in $B(H)$. A C*-algebra $A$ is said to have the \textit{weak expectation property} if $A$ admits a weak expectation for every \textit{faithful} representation $\pi: A\rightarrow B(H)$. The \textit{weak expectation property} of a C*-algebra was introduced by Lance \cite{L} in his characterization of nuclearity of discrete group C*-algebras. 

In 1993, Kirchberg \cite{Kir93} connected the weak expectation property to the all important \textit{Connes embedding conjecture} besides establishing a tensor product characterization of the same. The weak expectation property has been investigated is some detail, albeit sporadically, by several authors thereafter. An operator system approach to studying the weak expectation property was initiated in \cite{FP12, FKP13}, while several characterizations of the weak expectation property were formulated using operator system tensor products in \cite{FKPT18}. More generally, the weak expectation property has been studied in the category of operator systems \cite{KPTT13} and an intrinsic order theoretic characterization (in the operator system category) was found in \cite{Lup18}. Prior to these developments, the weak expectation property for C*-algebras had been characterized using operator spaces in \cite{JLM99}. The reader is directed to \cite{BO, Oz04} for a comprehensive introduction to these and related topics and \cite{BF} for an elementary permanence property. Finally, for a more recent and detailed study on the weak expectation property see \cite{Pis20} and the references therein.

In contrast to the aforementioned literature on the weak expectation property, this article is an attempt to investigate the \textit{existence of a weak expectation of a given representation} (unital, on a separable Hilbert space) of a unital separable C*-algebra. While the answer to such, in the most general context is perhaps unlikely, we settle the question for a more tractable instance, using decompostion theory of representations of a separable, unital (in general non-type I) C*-algebra. The weak expectations obtained for these instances are what we call \textit{decomposable} weak expectations (see Definition \ref{dwe} for details).

We give a brief overview of what follows in this article. After recalling a few necessary notions from decomposition theory of representations of a separable C*-algebra $A$ in section 2, we establish a simple reduction of the general framework which is relevant to our analysis. Then we introduce a special class of maximal abelian subalgebras (MASA) in the commutant of a representation. We call these ``decomposable" MASAs. We shall see later that such a MASA is easy to construct in general. Following the introduction of decomposable MASAs, we define decomposable weak expectations in section 3. Further, in the same section, we give necessary and sufficient conditions for such weak expectations to exist for a given representation. It is perhaps worth noting here that, our proof is constructive and it gives a method to construct a family of such decomposable weak expectations. In section 4, we discuss an application of our decomposition theory approach to construct weak expectations. More specifically, we analyse the case of the GNS representation of a state $\omega$ and when does it admit a decomposable weak expectation. It turns out that such a situation can occur if and only if the state in question admits a \textit{maximal orthogonal decomposable measure} on the state space $S(A)$ of the C*-algebra with $\omega$ as the barycenter of the measure. The class of orthogonal \textit{decomposable} measures on the state space $S(A)$ is a sub-class of orthogonal measures on $S(A)$, i.e. every orthogonal measure need not be a decomposable orthogonal measure. In the final section of this article (section 5) we give an example of a non-degenerate (not faithful, of course) representation of a separable C*-algebra, namely the full group C*-algebra $\cfinfty$ of the free group on countably infinite generators, which admit a decomposable weak expectation and we construct such a weak expectation to conclude the article.

In a forthcoming article we shall study the space of weak expectations of a represented C*-algebra as a compact affine space from a barycentric perspective.

\section{Preliminaries}

In this section, we briefly recall some definitions and results from decomposition theory of representations of C*-algebras first. For a detailed introduction to this subject see \cite{OB1, DixC, DixV, KR1, KR2}. 

Let $(X,\mu)$ be a standard measure space \cite[Section 4.4.1]{OB1}. Suppose $\{H_p\}_{p\in X}$ denotes a \textit{measurable family} of (separable) Hilbert spaces \cite[Definition 4.4.1B]{OB1}. The direct integral Hilbert space of the family $\{H_p\}_{p\in X}$ is denoted by $$H=\di_X H_p~d\mu(p).$$ The abelian von-Neumann algebra $L^\infty (X, \mu)$ acts as multiplication operators on the direct integral Hilbert space $H$ and are called \textit{diagonalizable operators} on $H$. For a measurable, essentially bounded family of operators $\{T_p\}_{p\in X}$ (see \cite[Section 4.4.1]{OB1}), denote the direct integral of $\{T_p\}_{p\in X}$ by $$\di_X T_p~d\mu (p).$$ The bounded operators of this form on the direct integral Hilbert space $H$ are called \textit{decomposable operators}. The collection of decomposable operators form a von-Neumann algebra and the following theorem is a well known fact:

\begin{theorem} \cite[Theorem 14.1.10]{KR2} \label{RL}
If $H$ is the direct integral of $\{H_p\}_{p\in X}$ over $(X,\mu)$, the set $\clR$ of decomposable operators is a von-Neumann algebra with abelian commutant given by the algebra of diagonalizable operators  on $H$ i.e $\clR^{'}=L^\infty(X,\mu)$.
\end{theorem}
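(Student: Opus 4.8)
The plan is to recognize $\clR$ as the commutant of the diagonalizable operators and then read off both assertions from the double commutant theorem. Write $\mathcal{D} = L^\infty(X,\mu)$ for the abelian algebra of diagonalizable operators acting on $H$. The core of the argument is the identity $\clR = \mathcal{D}'$. Granting this, the two conclusions are immediate: a commutant is always a von Neumann algebra, so $\clR$ is one; and by von Neumann's double commutant theorem
$$\clR' = \mathcal{D}'' = \mathcal{D} = L^\infty(X,\mu),$$
where the last equality uses that the diagonalizable operators already form a weakly closed algebra.

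First I would dispatch the easy inclusion $\clR \subseteq \mathcal{D}'$, which is essentially built into the definitions. If $T = \di_X T_p~\dmu$ is decomposable and $M_f \in \mathcal{D}$ is multiplication by $f \in L^\infty(X,\mu)$, then on each fiber $H_p$ the operator $T_p$ commutes with the scalar $f(p)$; integrating over $X$ shows $T M_f = M_f T$. Thus every decomposable operator commutes with every diagonalizable one.

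The substance of the theorem lies in the reverse inclusion $\mathcal{D}' \subseteq \clR$: every operator commuting with all diagonalizable operators is decomposable. Given $T \in \mathcal{D}'$, the goal is to manufacture a measurable, essentially bounded field $\{T_p\}_{p \in X}$ with $T = \di_X T_p~\dmu$. I would fix a fundamental sequence $\{e_n\}$ of measurable vector fields that is total in almost every fiber, available from the separability and measurable structure of $\{H_p\}$. Commutation with the multiplication operators forces $T$ to act locally over $X$ --- it cannot transport mass between disjoint measurable subsets --- and this locality lets one define the matrix coefficients $p \mapsto \la T_p e_n(p), e_m(p) \ra$ as genuine measurable functions on $X$, from which the fiber operators $T_p$ are recovered by a measurable selection. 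The real obstacles are the two standard but delicate points: (i) essential boundedness, $\mathrm{ess\,sup}_p \|T_p\| = \|T\|$, which is obtained by testing $T$ against normalized vector fields concentrated on small sets where $\|T_p\|$ is nearly attained; and (ii) the measurability of the assembled field $p \mapsto T_p$, which requires a diagonal argument over the countable family $\{e_n\}$ together with a redefinition off a null set. Once $\{T_p\}$ is in hand, evaluating $\di_X T_p~\dmu$ on the total set $\{e_n\}$ shows it coincides with $T$, completing the identification $\mathcal{D}' = \clR$ and hence the theorem.
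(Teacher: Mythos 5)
The paper does not prove this statement; it is quoted verbatim from Kadison--Ringrose \cite[Theorem 14.1.10]{KR2} as background. Your outline is essentially the argument given in that reference: the key identity $\clR=\mathcal{D}'$ with $\mathcal{D}$ the diagonalizable algebra, the easy fiberwise inclusion $\clR\subseteq\mathcal{D}'$, the substantive reverse inclusion via a fundamental sequence of measurable vector fields, and the double commutant step $\clR'=\mathcal{D}''=\mathcal{D}$; you also correctly isolate the two delicate points (measurability of $p\mapsto T_p$ and the essential-sup bound). The only thing you assert without argument is that $\mathcal{D}$ is itself weakly closed, which in the cited source is a separate preliminary result and would need its own (short) justification in a self-contained write-up.
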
 

Let $A$ be a unital separable C*-algebra and let $\{\pi_p : A\rightarrow B(H_p)\}_{p\in X}$ be a family of representations of $A$ on the measurable family of Hilbert spaces $\{H_p\}_{p\in X}$. The family $\{\pi_p\}_{p\in X}$ is called measurable if: for all $a\in A$, $\{\pi_p(a)\}_{p\in X}$ is a measurable family of essentially bounded operators. Therefore, for all $a\in A$: $$\di_X \pi_p (a)~d\mu(p) \in \clR.$$

Now, let $A$ be a separable unital C*-algebra and $\pi:A\rightarrow B(H)$ be a faithful non-degenerate representation of $A$ on a separable Hilbert space $H$ (not to be confused with the direct integral Hilbert space mentioned above, at the moment). Let $\acomm$ denote the commutant of $\pi(A)$ in $B(H)$. Let $\clL \subset \acomm$ be an abelian von-Neumann subalgebra of $\acomm$. The \textit{direct integral decomposition} of $\pi$ with respect to $\clL$ is given by the following fundamental theorem for the spatial decomposition of representations:

\begin{theorem}\cite[Theorem 4.4.7]{OB1} \label{dint}
Let $A$ be a separable C*-algebra, $\pi$ a non-degenerate representation of $A$ on a separable Hilbert space $H$, and $\clL$ be an abelian von-Neumann subalgebra of $\acomm$. It follows that there exists a standard measure space $X$, a positive bounded measure $\mu$ on $X$, a measurable family of Hilbert spaces $\{H_p\}_{p\in X}$, a measurable family of representations $\{\pi_p\}_{p\in X}$ on $\{H_p\}_{p\in X}$ and a unitary map $$U: H \rightarrow \di_X H_p~d\mu(p)$$ such that $U\clL U^*$ is just the set of diagonalizable operators on $$\di_X H_p~d\mu(p)$$ and $$U\pi(a) U^*=\di_X \pi_p (a)~d\mu(p)$$ for all $a\in A$.
\end{theorem}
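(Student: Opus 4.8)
The plan is to carry out the decomposition in two stages: first diagonalize the abelian algebra $\clL$ to manufacture the standard measure space and the direct integral model of $H$, and then exploit the resulting identification of $\clL'$ with the decomposable operators (via Theorem \ref{RL}) to split $\pi$ fibrewise. First I would invoke the structure theory of abelian von Neumann algebras on a separable Hilbert space. Since $H$ is separable, $\clL$ has separable predual and is therefore singly generated; applying the spectral theorem to a self-adjoint generator produces a compact metrizable space $X$ together with a finite positive Borel measure $\mu$ (extracted from a cyclic vector, or more generally from a spectral measure on a separating sequence of vectors) such that $(X,\mu)$ is a standard measure space and $\clL \cong L^\infty(X,\mu)$ as von Neumann algebras.

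Next I would produce the unitary $U$. Starting from a countable family of vectors generating $H$ under $\clL'$ together with the measure $\mu$, one assembles a measurable family $\{H_p\}_{p\in X}$ and an isomorphism $U: H \rightarrow \di_X H_p~\dmu$ carrying the generator of $\clL$ to a multiplication operator, so that $U\clL U^*$ is precisely the algebra of diagonalizable operators $L^\infty(X,\mu)$. I would then identify the commutant: from $\clL \subseteq \acomm$ one gets $\pi(A)\subseteq \clL'$ by taking commutants, and Theorem \ref{RL} (applied to the diagonalizable algebra, whose commutant is the decomposable algebra $\clR$) gives $U\clL' U^* = (U\clL U^*)' = \clR$. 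Hence each $U\pi(a)U^*$ is decomposable, and there exists a measurable essentially bounded field $\{\pi_p(a)\}_{p\in X}$ with $U\pi(a)U^* = \di_X \pi_p(a)~\dmu$.

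Finally I would verify that $\{\pi_p\}$ is a measurable family of representations. Fixing a countable dense $*$-subalgebra $A_0\subseteq A$ (this is where separability of $A$ is essential), the linearity, multiplicativity, unitality, and $*$-preservation of $\pi$ translate, for each of the countably many algebraic constraints coming from $A_0$, into the corresponding identities for the fields $\{\pi_p(a)\}$ off a $\mu$-null set. Discarding the countable union of these null sets yields a single conull set on which $p\mapsto \pi_p$ is a $*$-homomorphism on $A_0$, which extends by continuity to a bona fide representation $\pi_p$ of $A$ for $\mu$-almost every $p$.

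The main obstacle is the measure-theoretic bookkeeping in the second and fourth steps: constructing the measurable Hilbert space field and the intertwining unitary $U$ rigorously, and controlling the exceptional null sets so that the fibre maps are genuinely representations almost everywhere rather than merely satisfying the defining relations pointwise. Separability of both $A$ and $H$ is exactly what keeps these null-set arguments tractable, since it reduces infinitely many conditions to countably many and guarantees the standard Borel structure underlying the direct integral.
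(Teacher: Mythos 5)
This statement is quoted verbatim from \cite[Theorem 4.4.7]{OB1} and the paper supplies no proof of its own, so there is nothing internal to compare against; your sketch follows the standard argument given in that reference (diagonalize the singly generated abelian algebra $\clL$ via multiplicity theory to build the direct integral model, identify $U\clL'U^*$ with the decomposable operators $\clR$ using Theorem \ref{RL}, observe $\pi(A)\subseteq\clL'$ to decompose each $\pi(a)$ fibrewise, and clean up null sets over a countable dense $*$-subalgebra of $A$). The outline is correct; the only step you compress is the construction of the measurable field $\{H_p\}$ itself, which requires decomposing $H$ into cyclic subspaces under $\clL$ and invoking the multiplicity/dimension function, exactly the bookkeeping you flag at the end.
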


An important consequence of the aforementioned theorem, which is of particular relevance to this work, is the case of extremal decompositions given below:

\begin{corollary}\cite[Corollary 4.4.8]{OB1} \label{max}
Let $A$ be a separable C*-algebra, $\pi$ a non-degenerate representation of $A$ on a separable Hilbert space $H$, and $\clL$ be an abelian von-Neumann subalgebra of $\acomm$ and $$\pi=\di_X \pi_p~d\mu(p)$$ be the decomposition of $\pi$ w.r.t. $\clL$. Then the following two statements are equivalent:
\begin{enumerate}
\item $\pi_p$ is irreducible for $\mu$-almost all $p\in X$.
\item $\clL$ is maximal abelian in $\acomm$.
\end{enumerate}
\end{corollary}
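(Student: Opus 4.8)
The plan is to transport everything into the concrete direct-integral model of Theorem \ref{dint} and then recast the maximal abelian condition as a fibrewise statement about the commutants ${\pi_p(A)}^{\prime}$. First I would apply Theorem \ref{dint} to identify $H$ with $\di_X H_p \dmu$, the algebra $\clL$ with the diagonalizable operators $L^\infty(X,\mu)$, and each $\pi(a)$ with the decomposable operator $\di_X \pi_p(a) \dmu$; since both maximal abelianness and irreducibility are preserved by the spatial isomorphism $U$, there is no loss in arguing inside this model. Now $\clL$ is maximal abelian in $\acomm$ exactly when it equals its relative commutant there, i.e. when $\clL^{\prime} \cap \acomm = \clL$, and since Theorem \ref{RL} identifies $\clL^{\prime} = (L^\infty(X,\mu))^{\prime}$ with the algebra $\clR$ of decomposable operators, condition (2) is equivalent to
\[
\clR \cap \acomm = \clL .
\]
So the whole statement reduces to computing the intersection of the decomposable operators with the commutant of $\pi(A)$.

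Next I would establish the fibrewise description
\[
\clR \cap \acomm = \Big\{ \di_X T_p \dmu : T_p \in {\pi_p(A)}^{\prime} \text{ for } \mu\text{-a.e. } p \Big\} .
\]
The inclusion $\supseteq$ is immediate, because if $T_p$ commutes with $\pi_p(a)$ for a.e. $p$ then $\di_X T_p \dmu$ commutes with $\di_X \pi_p(a)\dmu = \pi(a)$. For $\subseteq$, a decomposable $T = \di_X T_p \dmu$ commuting with a fixed $\pi(a)$ yields $T_p \pi_p(a) = \pi_p(a) T_p$ for $\mu$-a.e. $p$, but the exceptional null set a priori depends on $a$. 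Here I would use the separability of $A$: fixing a countable dense $*$-subalgebra $\{a_n\}$ and discarding the countable union of the associated null sets produces a single conull set on which $T_p$ commutes with every $a_n$, hence with all of $A$ by continuity, so that $T_p \in {\pi_p(A)}^{\prime}$ a.e. This is precisely the point at which the separability hypothesis is essential.

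Granting this description, one direction is essentially formal. If (1) holds, then ${\pi_p(A)}^{\prime} = \bbC I_{H_p}$ for a.e. $p$, so every element of $\clR \cap \acomm$ has the form $\di_X \lambda(p) I_{H_p} \dmu$ with $\lambda$ measurable and essentially bounded (measurability of $\lambda$ following from that of the field $T_p$), and therefore lies in $\clL$; combined with the trivial inclusion $\clL \subseteq \clR \cap \acomm$ this gives $\clR \cap \acomm = \clL$, which is (2). For the reverse implication I would argue the contrapositive: supposing $\pi_p$ fails to be irreducible on a set $Y$ of positive measure, so that ${\pi_p(A)}^{\prime}$ contains a non-scalar self-adjoint operator for each $p \in Y$, the goal is to assemble these into a genuine essentially bounded decomposable operator $\di_X T_p \dmu \in \clR \cap \acomm$ that is not diagonalizable, contradicting $\clR \cap \acomm = \clL$.

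I expect this assembly to be the main obstacle: manufacturing a measurable, essentially bounded field $p \mapsto T_p \in {\pi_p(A)}^{\prime}$ that is non-scalar on a positive-measure subset of $Y$ is a measurable selection problem, not a pointwise one. The clean way to dispatch it is to observe that $\clR \cap \acomm$ is itself a von Neumann algebra containing $\clL = Z(\clR)$ in its center, hence is decomposable, and then to invoke the measurable selection machinery underpinning direct-integral decomposition theory (as developed in \cite{OB1, DixV, KR2}) to realize any fibrewise nontriviality by a global decomposable operator. With that input, the failure of irreducibility on $Y$ forces $\clR \cap \acomm \supsetneq \clL$, which completes the contrapositive and therefore the equivalence.
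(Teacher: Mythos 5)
The paper offers no proof of this corollary at all --- it is imported verbatim from \cite[Corollary 4.4.8]{OB1} --- so the only benchmark is the standard textbook argument, which your proposal essentially reconstructs. Your reduction is the right one: maximal abelianness of $\clL$ in $\acomm$ is the identity $\clL^{\prime}\cap\acomm=\clL$, Theorem \ref{RL} turns this into $\clR\cap\acomm=\clL$, and your fibrewise computation of $\clR\cap\acomm$ (using a countable dense subset of $A$ to absorb the $a$-dependence of the exceptional null sets) is correct and is exactly where separability is used. The direction $(1)\Rightarrow(2)$ is then complete as written.

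The one genuine gap is the step you flag yourself: in the contrapositive of $(2)\Rightarrow(1)$ you must exhibit a \emph{measurable}, essentially bounded field $p\mapsto T_p\in{\pi_p(A)}^{\prime}$ that is non-scalar on a set of positive measure, and ``invoke the measurable selection machinery'' is a pointer rather than an argument. You can sidestep the selection problem entirely by proving $(2)\Rightarrow(1)$ directly: $\clR\cap\acomm$ is a von Neumann algebra on a separable Hilbert space containing the diagonal algebra, hence is generated by $\clL$ together with a countable family $T^{(n)}=\di_X T^{(n)}_p~d\mu(p)$, and by the commutation theorem for direct integrals of von Neumann algebras (\cite[Theorem 14.1.24]{KR2}, or \cite{DixV}, Part II, Chapter 3) the fibres $\{T^{(n)}_p\}_n$ generate ${\pi_p(A)}^{\prime}$ for $\mu$-almost every $p$. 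If $\clR\cap\acomm=\clL$, each $T^{(n)}$ is diagonalizable, so each $T^{(n)}_p$ is scalar off a null set; discarding the countable union of these null sets gives ${\pi_p(A)}^{\prime}=\bbC I_{H_p}$, i.e.\ irreducibility, almost everywhere. With that substitution (or with an explicit appeal to the uniqueness, up to null sets, of the measurable field of von Neumann algebras representing $\clR\cap\acomm$) your argument closes; everything else in the proposal is sound.
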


Next, we briefly recall the barycentric disintegration of a state $\omega$ of a separable unital C*-algebra $A$ and the associated spatial disintegration of the GNS representation $\pi_{\omega}$ of the state, due to Effros. For this, we quickly recall the concept of an \textit{orthogonal measure} on the state space $S(A)$ of $A$.

A pair of positive linear functionals $\omega_1, \omega_2 \in S(A)$ are said to be \textit{orthogonal} if the positive linear functional $\omega=\omega_1 + \omega_2$ be such that $H_{\omega}= H_{\omega_1}\oplus H_{\omega_2}$, $\pi_{\omega}=\pi_{\omega_1} \oplus \pi_{\omega_2}$ and $\xi_\omega = \xi_{\omega_1} \oplus \xi_{\omega_2}$, where $(\pi_\omega, H_\omega, \xi_\omega)$ denotes the GNS tuple for $\omega$. This is denoted by $\omega_1 \perp \omega_2$.

\begin{definition}\cite[Definition 4.1.20]{OB1}
A positive regular Borel measure $\mu$ on $S(A)$ is said to be orthogonal if, for any Borel set $E\subset S(A)$, one has $$\int_E \omega^\prime d\mu (\omega^\prime) \perp \int_{S(A)\backslash E} \omega^\prime d\mu (\omega^\prime)$$ where, the integral denotes the barycenter of $\mu$ on the respective sets of integration.
\end{definition}

See also \cite[Theorem 4.1.25]{OB1} for the correspondence between orthogonal measures with barycenter $\omega$ and the abelian von-Neumann subalgebras of $\aocom$. The following theorem (stated in the separable context, for orthogonal measures as relevant to our work) is due to Effros and connects the disintegration theory of representations to the barycentric decomposition of states of a (separable) unital C*-algebra in terms of its GNS representation:

\begin{theorem}\cite[Theorem 4.4.9 (Effros)]{OB1} \label{eff}
Let $A$ be a unital separable C*-algebra and $\mu$ be an orthogonal probability measure on $X=S(A)$ with barycenter $\omega$. Let $$\pi_\mu=\di_X \pi_{\omega^\prime}~d\mu(\omega^\prime)$$ be the direct integral representation of $A$ on $$H_\mu=\di_X H_{\omega^\prime}~d\mu(\omega^\prime).$$ Then $\pi_\omega$ is unitarily equivalent to $\pi_\mu$ and the direct integral decomposition of $\pi_\omega$ with respect to the abelian von-Neumann subalgebra $\clL_\mu$ of $\aocom$ is given by $$\pi_\omega=\di_X \pi_{\omega^\prime}~d\mu(\omega^\prime).$$ Here $\clL_\mu$ is the unique abelian von-Neumann subalgebra of $\aocom$ corresponding to the orthogonal measure $\mu$ \cite[Theorem 4.1.25]{OB1}.
\end{theorem}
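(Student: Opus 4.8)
The plan is to realise the asserted unitary equivalence as a special case of the spatial disintegration theorem (Theorem \ref{dint}), applied to $\pi_\omega$ along the distinguished abelian algebra $\clL_\mu$, and then to identify the resulting measurable field of fibre representations with the field of GNS triples of the states $\omega'$. First I would recall the correspondence of \cite[Theorem 4.1.25]{OB1} in concrete form: to the orthogonal measure $\mu$ one attaches the normal linear map $k_\mu : L^\infty(X,\mu) \to B(H_\omega)$ determined by $\la k_\mu(f)\,\pi_\omega(a)\xi_\omega,\ \pi_\omega(b)\xi_\omega\ra = \int_X f(\omega')\,\omega'(b^*a)\,d\mu(\omega')$ for $a,b \in A$ and $f \in L^\infty(X,\mu)$. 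Taking $f=1$ and using that $\omega$ is the barycenter shows $k_\mu(1)=I$, and a direct computation shows each $k_\mu(f)$ lies in $\aocom$. The content of the correspondence is that $\mu$ is orthogonal precisely when $k_\mu$ is multiplicative; in that case $k_\mu$ is a normal $*$-isomorphism of $L^\infty(X,\mu)$ onto an abelian von Neumann subalgebra $\clL_\mu \subseteq \aocom$. This is the input I would use: $\clL_\mu \cong L^\infty(X,\mu)$, canonically.

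Next I would apply Theorem \ref{dint} to the pair $(\pi_\omega, \clL_\mu)$. This produces a standard measure space $(Y,\nu)$, a measurable field $\{H_q\}$, a measurable family of representations $\{\sigma_q\}$, and a unitary $U : H_\omega \to \di_Y H_q\, d\nu(q)$ carrying $\clL_\mu$ onto the diagonalisable operators and $\pi_\omega(a)$ onto $\di_Y \sigma_q(a)\, d\nu(q)$. Since the diagonalisable algebra is $\cong L^\infty(Y,\nu)$ while simultaneously $U\clL_\mu U^* \cong \clL_\mu \cong L^\infty(X,\mu)$, the measure algebras $L^\infty(Y,\nu)$ and $L^\infty(X,\mu)$ are isomorphic; by uniqueness of standard measure spaces up to a measure-class isomorphism I may transport the decomposition so that $(Y,\nu)=(X,\mu)=(S(A),\mu)$, with the diagonalisable algebra identified, via $U$ and $k_\mu$, with multiplication by $L^\infty(S(A),\mu)$.

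Finally I would identify the fibres. Decomposing the cyclic vector as $U\xi_\omega = \di_X \xi_{\omega'}\, d\mu(\omega')$, I would test the defining formula for $k_\mu$ against the diagonalisable algebra: for every $g \in L^\infty(X,\mu)$ and $a\in A$ one has $\int_X g(\omega')\,\la \sigma_{\omega'}(a)\xi_{\omega'},\xi_{\omega'}\ra\, d\mu(\omega') = \la k_\mu(g)\pi_\omega(a)\xi_\omega,\xi_\omega\ra = \int_X g(\omega')\,\omega'(a)\, d\mu(\omega')$, whence $\la \sigma_{\omega'}(a)\xi_{\omega'},\xi_{\omega'}\ra = \omega'(a)$ for $\mu$-almost every $\omega'$. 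Running $a$ over a countable dense $*$-subalgebra and discarding a null set yields a single conull set on which the fibre state at $\omega'$ equals $\omega'$. Since $\xi_\omega$ is cyclic for $\pi_\omega$, cyclicity of $\xi_{\omega'}$ for $\sigma_{\omega'}$ holds for almost every $\omega'$, so by uniqueness of the GNS construction each fibre triple $(\sigma_{\omega'}, H_{\omega'}, \xi_{\omega'})$ is unitarily equivalent to $(\pi_{\omega'}, H_{\omega'}, \xi_{\omega'})$, and the equivalence $\pi_\omega \cong \pi_\mu$ with the stated form of the decomposition follows.

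I expect the main obstacle to lie precisely in this last step: upgrading the pointwise, up-to-unitary identification of fibres to a genuine \emph{measurable} field of intertwining unitaries, so that the abstract disintegration of $\pi_\omega$ is matched measurably — not merely fibrewise — with the independently constructed field of GNS representations $\{\pi_{\omega'}\}$. This is a standard-Borel / measurable-selection issue, and it is where the almost-everywhere statements must be promoted to a coherent global identification; the uniqueness of $\clL_\mu$ in the correspondence is what guarantees there is no ambiguity in the base measure against which this selection is performed.
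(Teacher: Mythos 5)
This statement is quoted from the literature (it is Effros's theorem, \cite[Theorem 4.4.9]{OB1}) and the paper supplies no proof of it, so there is no in-paper argument to compare against; I can only assess your reconstruction on its own terms. Your route is viable but is not the standard one. The classical proof does not invoke the abstract disintegration theorem at all: it builds the unitary $U : H_\omega \to H_\mu$ explicitly on the dense subspace spanned by $k_\mu(f)\pi_\omega(a)\xi_\omega$ via $U\bigl(k_\mu(f)\pi_\omega(a)\xi_\omega\bigr) = \di_X f(\omega^\prime)\pi_{\omega^\prime}(a)\xi_{\omega^\prime}\,d\mu(\omega^\prime)$, where multiplicativity of $k_\mu$ (i.e.\ orthogonality of $\mu$) is exactly what makes this map isometric, and density of the range follows from cyclicity of each $\xi_{\omega^\prime}$. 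That construction delivers the intertwining of $\pi_\omega$ with $\pi_\mu$ and the identification of $\clL_\mu$ with the diagonalisable algebra in one stroke, and entirely sidesteps the fibre-matching problem you flag. What your approach buys is generality of outlook (it exhibits the theorem as ``abstract disintegration plus identification of fibres''), at the cost of having to do the identification afterwards.

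Two points in your last step deserve attention. First, the a.e.\ cyclicity of $\xi_{\omega^\prime}$ for $\sigma_{\omega^\prime}$ is not immediate from cyclicity of $\xi_\omega$ for $\pi_\omega(A)$ alone; it uses the standard fact that a vector is cyclic for a decomposable von Neumann algebra containing the diagonal algebra if and only if its components are cyclic almost everywhere, and here one must first pass to the algebra generated by $\pi_\omega(A)$ and $\clL_\mu$. Second, the ``measurable selection'' obstacle you name at the end is real if left as stated, but it is not actually a selection problem: once $\la \sigma_{\omega^\prime}(a)\xi_{\omega^\prime},\xi_{\omega^\prime}\ra = \omega^\prime(a)$ holds on a conull set for all $a$ in a countable dense subset and both cyclic vectors are cyclic there, the GNS uniqueness theorem produces a \emph{unique} unitary $V_{\omega^\prime}$ sending $\sigma_{\omega^\prime}(a)\xi_{\omega^\prime}$ to $\pi_{\omega^\prime}(a)\xi_{\omega^\prime}$, and measurability of $\omega^\prime \mapsto V_{\omega^\prime}$ follows directly from measurability of the two fields of vectors $\omega^\prime\mapsto\sigma_{\omega^\prime}(a_n)\xi_{\omega^\prime}$ and $\omega^\prime\mapsto\pi_{\omega^\prime}(a_n)\xi_{\omega^\prime}$. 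You should close the argument by saying this explicitly rather than deferring it as an expected obstacle; as written, the proof is incomplete at precisely the point you identify.
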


Finally to conclude this section, we point out a simplification of our framework for studying weak expectations of a given representation (using decomposition theory), which is relevant to our context.

Let $\pi: A \rightarrow B(H)$ be a representation of a unital, separable C*-algebra on a separable Hilbert space $H$. For an abelian von-Neumann subalgebra $\clL \subset \acomm$ we note that the decomposable operators $\clR$ w.r.t. $\clL$ is an injective von-Neumann algebra by Theorem \ref{RL}. Further, by Theorem \ref{dint}, we have $(\clL_p)^{\prime} = B(H_p)$ a.e. and by \cite[Proposition 14.1.24]{KR2} we get: $$\clR=\di_X B(H_p)~d\mu(p).$$ Note that, since $\pi(A) \subset \clR$, we have $\adcom \subset \clR$, i.e. $\adcom$ is a decomposable von-Neumann algebra. In general, $\adcom\neq \clR$.

We will use an easy reduction of our framework, to study weak expectations, as given below:

\begin{proposition} \label{red}
Let $\pi: A \rightarrow B(H)$ be a representation of a unital, separable C*-algebra on a separable Hilbert space $H$. Let $\clL \subset \acomm$ an abelian von-Neumann subalgebra and $\clR$ be the decomposable operators w.r.t. $\clL$. Then $\pi$ admits a weak expectation if and only if there exist a unital completely postive map $\Phi: \clR \rightarrow \adcom$ such that $\Phi (\pi(a))=\pi(a)$ for all $a\in A$. 
\end{proposition}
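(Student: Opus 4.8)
The plan is to prove the two implications separately, with the forward direction being essentially immediate and the reverse direction resting on the injectivity of $\clR$.

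For the forward implication, suppose $\pi$ admits a weak expectation $\Psi \colon B(H) \to \adcom$ with $\Psi(\pi(a)) = \pi(a)$ for all $a \in A$. Since $\clR \subseteq B(H)$, I would simply take $\Phi := \Psi|_{\clR} \colon \clR \to \adcom$. A restriction of a unital completely positive map to a unital operator subsystem is again unital and completely positive, and because $\pi(A) \subseteq \clR$, the identity $\Phi(\pi(a)) = \pi(a)$ is inherited directly from $\Psi$. This produces the required map with no real work.

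For the reverse implication, suppose we are given a unital completely positive map $\Phi \colon \clR \to \adcom$ with $\Phi(\pi(a)) = \pi(a)$. The key observation is that $\clR$ is injective: by Theorem \ref{RL} its commutant $\clR' = L^\infty(X,\mu)$ is abelian, hence injective, and injectivity of a von Neumann algebra is equivalent to injectivity of its commutant. Injectivity of $\clR \subseteq B(H)$ means precisely that the identity map $\mathrm{id}_{\clR} \colon \clR \to \clR$ extends to a unital completely positive map $E \colon B(H) \to \clR$; this $E$ is a conditional expectation (a ucp projection) of $B(H)$ onto $\clR$, satisfying $E|_{\clR} = \mathrm{id}_{\clR}$.

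Finally, I would form the composition $\Psi := \Phi \circ E \colon B(H) \to \clR \to \adcom$. As a composition of unital completely positive maps, $\Psi$ is unital completely positive. For $a \in A$ we have $\pi(a) \in \clR$, so $E(\pi(a)) = \pi(a)$, whence $\Psi(\pi(a)) = \Phi(\pi(a)) = \pi(a)$. Thus $\Psi$ is a weak expectation of $\pi$, completing the equivalence. The only nontrivial input is the passage from injectivity of $\clR$ to the conditional expectation $E$, which is the standard characterization of injective von Neumann algebras; everything else is a routine verification that restriction and composition preserve the relevant properties. I therefore expect the identification of $\clR$ as injective---already recorded in the discussion preceding the statement---to be the crux, with no genuine obstacle remaining once it is in hand.
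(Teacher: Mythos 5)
Your proof is correct and follows exactly the route the paper intends: the paper's own proof is a one-line appeal to the injectivity of $\clR$ (recorded just before the statement via Theorem \ref{RL}), and your argument simply spells out the restriction in one direction and the composition $\Phi \circ E$ with a ucp extension $E$ of $\mathrm{id}_{\clR}$ in the other. No gaps.
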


\begin{proof}
For simplicity of nomenclature we call the map $\Phi$ a weak expectation of $\pi$ on $\clR$. The proof follows from the injectivity of $\clR$.  
\end{proof}

\section{Decomposable Weak Expectations}

In the rest of this article, for a given representation $\pi:A\rightarrow B(H)$ of a unital separable C*-algebra on a separable Hilbert space $H$, we will only consider the disintegration of $\pi$ w.r.t. a \textit{maximal abelian} von-Neumann subalgebra $\clL$ of $\acomm$. In that case, by Corollary \ref{max} we get $$\pi=\di_X\pi_p~\dmu,$$ where $\pi_p$ is \textit{irreducible} a.e. on $X$. Henceforth, we shall omit the differential $\dmu$ and simply write $\pi=\di_X\pi_p$ and elsewhere for direct integrals, to ease notation. Before embarking on the notion of decomposable weak expectations, we need some preparations.

Let $(X, \mu)$ be the standard measure space obtained by Theorem \ref{dint} for a representation $\pi:A\rightarrow B(H)$ corresponding to a given maximal abelian von-Neumann subalgebra $\clL \subset \acomm$ and $\pi=\di_X\pi_p$. Then outside a set of measure zero (say $X_0$), we have $\pi_p$ to be irreducible for $p\in X\backslash X_0$. Next, we define a natural equivalence relation on $X\backslash X_0$. For $p,q \in X\backslash X_0$, say $p\sim q$ if $\pi_p$ is unitarily equivalent to $\pi_q$ (which is possible by virtue of extremal decomposition of $\pi$ w.r.t. maximal abelian $\clL$). Denote the equivalence class of $p$ in $X\backslash X_0$ by $E_p$. Then we have the set theoretic partition $$X\backslash X_0=\coprod_{p \in X\backslash X_0} E_p.$$ We characterize when this partition is measurable, in the proposition below. This is of primary importance to our analysis that follows thereafter. But before that we need some notation.

Let $\Lambda$ denote the set of distinct classes of irreducible representations of $A$ which appear in the disintegration of $\pi$ with respect to $\clL$. Let $\{\rho\}$ denote an exhaustive tuple of representative from each class in $\Lambda$ on Hilbert spaces $\{H_\rho\}$. Further let $\{h_\rho\}_{\rho\in\{\rho\}}$ denote a tuple of unit vectors from $\{H_\rho\}$.

\begin{proposition} \label{mbl}
Let $E_p$ be the equivalence class of $p\in X\backslash X_0$ as defined above.  Then $\coprod E_p$ is a countable measurable partition if and only if there are countably many elements in $\{\rho\}$ and for any $\{\rho\}$ and any tuple of unit vectors $\{h_\rho\}_{\rho\in\{\rho\}}$ there exist a family of unitaries $$\{V^*_{\rho, p}: H_\rho \rightarrow H_p\}_{p\in E_p, \rho \in \{\rho\}}$$ such that the function defined on $X\backslash X_0$ by $p\mapsto V^*_{\rho, p} h_\rho$ is a measurable vector and $V_{\rho, p} \pi_p V_{\rho, p}^*=\rho$ for all $p\in E_p$.
\end{proposition}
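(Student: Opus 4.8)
The plan is to separate the two assertions bundled into the equivalence: countability of the partition, which simply matches countability of $\{\rho\}$, and measurability of the partition, which is the substantive point. Since each equivalence class $E_p$ is exactly the set of $p$ with $\pi_p$ unitarily equivalent to a fixed $\rho\in\{\rho\}$, there is a bijection $E_\rho\leftrightarrow\rho$ between classes and elements of $\{\rho\}$; hence the partition is countable precisely when $\{\rho\}$ is, and it remains to match measurability of the partition with the existence of the measurable intertwining fields. The device linking the two is GNS uniqueness: because $\rho$ is irreducible, $h_\rho$ is cyclic and $\omega_\rho:=\la\rho(\cdot)h_\rho,h_\rho\ra$ is a pure state, so giving a unitary $V^*_{\rho,p}:H_\rho\to H_p$ with $V_{\rho,p}\pi_p V^*_{\rho,p}=\rho$ is the same as giving a unit vector $\xi_p:=V^*_{\rho,p}h_\rho\in H_p$ with $\la\pi_p(a)\xi_p,\xi_p\ra=\omega_\rho(a)$ for all $a$, the unitary being recovered by $V^*_{\rho,p}\rho(a)h_\rho=\pi_p(a)\xi_p$. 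Thus measurability of $p\mapsto V^*_{\rho,p}h_\rho$ is exactly measurability of the implementing vector field $p\mapsto\xi_p$.

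For the direction assuming the unitaries exist, fix a dense sequence $\{a_n\}$ in $A$ and, for each $\rho$, write $\eta_\rho$ for the given measurable field $p\mapsto V^*_{\rho,p}h_\rho$ on $X\backslash X_0$. I would show
$$E_\rho = \{\, p \in X\backslash X_0 : \la \pi_p(a_n)\eta_\rho(p), \eta_\rho(p)\ra = \omega_\rho(a_n) \ \text{ for all } n \,\}.$$
The right-hand set is measurable, since $\eta_\rho$ is a measurable vector field and each $a_n$ acts measurably, and the inclusion $\subseteq$ holds by construction. For the reverse inclusion, if $p$ lies in the right-hand set then by density $\la\pi_p(a)\eta_\rho(p),\eta_\rho(p)\ra=\omega_\rho(a)$ for every $a\in A$; taking $a=1$ gives $\|\eta_\rho(p)\|=1$, so $\eta_\rho(p)$ is a unit vector whose vector state is the pure state $\omega_\rho$. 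The cyclic subrepresentation of $\pi_p$ generated by $\eta_\rho(p)$ is then the GNS representation of $\omega_\rho$, namely $\rho$; and since $\pi_p$ is irreducible for $p\in X\backslash X_0$ (Corollary \ref{max}), this subrepresentation is all of $\pi_p$, whence $\pi_p\cong\rho$ and $p\in E_\rho$. As $\{\rho\}$ is countable, the sets $E_\rho$ form a countable measurable partition.

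For the converse, assume the partition is countable and measurable and fix arbitrary representatives $\{\rho\}$ and unit vectors $\{h_\rho\}$. On the measurable set $E_\rho$ the fibers $\{\xi\in H_p:\|\xi\|=1,\ \la\pi_p(a_n)\xi,\xi\ra=\omega_\rho(a_n)\ \forall n\}$ are nonempty (as $\pi_p\cong\rho$) and closed, and by the remark above each is a single phase-circle. Their graph is a Borel subset of the total space of the measurable field of Hilbert spaces furnished by Theorem \ref{dint}, so a measurable selection theorem (Kuratowski--Ryll-Nardzewski, in the form available for measurable fields of separable Hilbert spaces) yields a measurable section $p\mapsto\xi_p$ on $E_\rho$ implementing $\omega_\rho$. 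Defining $V^*_{\rho,p}$ from $\xi_p$ by GNS uniqueness, and extending $p\mapsto V^*_{\rho,p}h_\rho$ off $E_\rho$ by any fixed measurable unit vector field, one obtains, using measurability of $E_\rho$ to glue, a measurable vector field on all of $X\backslash X_0$ satisfying $V_{\rho,p}\pi_p V^*_{\rho,p}=\rho$ for $p\in E_\rho$, as required.

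The main obstacle is the measurable selection in the converse: one must verify that $p\mapsto\{\text{unit vectors in }H_p\text{ implementing }\omega_\rho\}$ has Borel graph relative to the measurable structure coming from Theorem \ref{dint}, with closed nonempty fibers, before invoking the selection theorem; this is exactly where standardness of $(X,\mu)$ and separability of $A$ and of the $H_p$ are used. I would also isolate as a preliminary lemma the elementary but crucial fact that in an irreducible representation a unit vector is determined up to a phase by its vector state (via Kaplansky density applied to the rank-one projection onto the vector), since it identifies the fibers in the converse and guarantees well-definedness of $V^*_{\rho,p}$ in both directions.
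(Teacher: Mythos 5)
Your proof is essentially correct, and the forward direction (unitaries $\Rightarrow$ measurable partition) is the same argument as the paper's: both of you cut out each class by countably many measurable vector-state conditions built from a dense sequence $\{a_n\}$, and both use irreducibility plus GNS uniqueness to show the resulting measurable set is exactly $E_\rho$ (the paper tests the quantities $\|\pi_p(a_n)h_p\|^2=\la\pi_p(a_n^*a_n)h_p,h_p\ra$ and recovers the full state by polarization, whereas you test $\la\pi_p(a_n)\eta_\rho(p),\eta_\rho(p)\ra$ directly; this is an immaterial difference). The genuine divergence is in the converse. The paper defines the constant-on-each-class field $\tilde\pi_p:=\rho$ for $p\in E_\rho$, observes that measurability of the partition makes $\{\tilde\pi_p\}$ a measurable field pointwise equivalent to $\{\pi_p\}$, and then cites Takesaki's theorem (\cite[Theorem 8.28]{Tak1}) to produce a measurable field of intertwining unitaries in one stroke. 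You instead propose a hand-rolled measurable selection (Kuratowski--Ryll-Nardzewski) applied to the multifunction $p\mapsto\{\xi\in H_p:\|\xi\|=1,\ \la\pi_p(\cdot)\xi,\xi\ra=\omega_\rho\}$, whose fibers you correctly identify as nonempty closed phase-circles. This route is viable, but the step you flag as ``the main obstacle'' --- verifying weak measurability of this multifunction relative to the measurable field structure (e.g.\ via a fundamental sequence of measurable vector fields and measurability of $p\mapsto d(e_k(p),\,\text{fiber})$) --- is exactly the content that Takesaki's theorem packages for you; as written, your converse is a sound plan with that one technical verification still owed, and you could close it either by carrying out the selection argument or simply by invoking the same reference the paper does. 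Your preliminary lemma (a unit vector in an irreducible representation is determined up to phase by its vector state) is correct and is implicitly what makes the unitaries $V^*_{\rho,p}$ well defined in both treatments, though it follows more directly from $\pi_p(A)'=\bbC I$ than from Kaplansky density.
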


\begin{proof}
First we prove the sufficiency condition for the partition to be measurable. Let $\{\rho\}$ and $\{h_\rho\}_{\rho\in\{\rho\}}$ be as described in the proposition and assume that the family of unitaries $\{V^*_{\rho, p}: H_\rho \rightarrow H_p\}_{p\in E_p, \rho \in \{\rho\}}$ exist with the properties as mentioned in the statement above. It is enough to prove that, for any $p\in X\backslash X_0$, the equivalence class $E_p$ is measurable. Consider the measurable vector $$h= \di_{X\backslash X_0} h_p = \di_{X\backslash X_0} V^*_{\rho, p} h_\rho \in \di_{X\backslash X_0} H_p.$$ Note that $\|h_p\|=1$ for all $p\in X\backslash X_0$ and $\| h\|=\mu(X)^{\frac{1}{2}}< \infty$.
	
Let $\{a_1, a_2,\ldots\}$ be a countable dense subset of the C*-algebra $A$ and $h= \di_{X\backslash X_0} h_p\in \di_X H_p$ be as described above. For $n\in\bbN$, define the function $f_n : X\backslash X_0 \rightarrow \bbC$ given by $$f_n (p)=\langle \pi_p(a^*_na_n)h_p, h_p \rangle = \| \pi_p(a_n)h_p \|^2.$$ Clearly $\{f_n\}_{n\in \bbN}$ are all measurable functions. Pick and fix $\rho\in\{\rho\}$ such that $\rho$ is equivalent to any $\pi_p$ for $p\in E_p$ and $h_\rho$ be the corresponding unit vector from $\{h_\rho\}$. For $m, n\in \bbN$ define the measurable sets: $$F_{\rho, h_\rho, m, n}:=\{ p\in X\backslash X_0 : |~ \|\rho(a_n)h_\rho \|^2 -  f_n(p)~ | < 1/m \}.$$ Therefore the set 	
\begin{eqnarray*}
F_{\rho, h_\rho} &:=& \bigcap_{m,n} F_{\rho, h_\rho, m, n}\\
&=&\{ p \in X\backslash X_0 : \| \rho(a_n)h_\rho \|^2 = \|\pi_p(a_n)h_p \|^2, ~ \forall~ n\in\bbN \}\\
&=&\{ p \in X\backslash X_0 : \| \rho(a)h_\rho \|^2 = \|\pi_p(a)h_p \|^2, ~ \forall~ a\in A \}
\end{eqnarray*}
is a measurable set. We show that $F_{\rho, h_\rho}=E_p$. To see this, simply note that all representations under consideration are irreducible and for $p\in F_{\rho, h_\rho}$ define the unitary $U_p$ by: $$U_p(\pi_p(a)h_p)=\rho(a)h_\rho,$$ for all $a\in A$. But then, $U_p=V_{\rho, p}$. So $\pi_p$ is equivalent to $\rho$ and this shows $F_{\rho, h_\rho} \subset E_p$. For the reverse inclusion, let $p \in E_p$. Then, by assumption we have $V_{\rho, p} \pi_p V_{\rho, p}^*=\rho$. But by the same vector equality $$V_{\rho, p}(\pi_p(a)h_p)=\rho(a)h_\rho,$$ we have $\| \rho(a)h_\rho \|^2 = \|\pi_p(a)h_p \|^2$ for all $a\in A$. This shows $E_p \subset F_{\rho, h_\rho}$ and thus $E_p=F_{\rho, h_\rho}$ is measurable. Finally, since the set $\{\rho\}$ is countable we get the countable partition $\coprod E_p$. 
	
Next, we prove the necessity condition. Now, let us assume that $\coprod E_p$ is a countable measurable partition. Let $\{\rho\}$ and $\{h_\rho\}_{\rho\in\{\rho\}}$ be as described earlier. As the partition is given in countably many sets we get the collection $\{\rho\}$ to be countable. Since $E_p$ is measurable for all $p\in X\backslash X_0$, the family of representations: $$\tilde{\pi}_p:=\rho$$ for $p\in E_p$ and $\rho$ corresponding to the class of $E_p$, is a measurable family of representations of $A$. So, we have the direct integral representation $$\tilde{\pi}=\di \tilde{\pi}_p$$ of $A$. Further, note that the function $p\mapsto \tilde{h}_p$ where $\tilde{h}_p:=h_\rho$ when $p\in E_p$ defines a measurable vector in $\di \tilde{H}_p$. Here $\tilde{H}_p =H_\rho$ when $p\in E_p$. Now, by construction $\pi_p$ is unitarily equivalent to $\tilde{\pi}_p$ for all $p\in X\backslash X_0$. By \cite[Theorem 8.28]{Tak1} we get a measurable operator function $p\mapsto U_p$ (defined everywhere by assigning unitaries on a set of measure zero if necessary) such that $U_p \pi_p U_p^*= \tilde{\pi}_p$ for all $p\in X\backslash X_0$ and the unitary $U=\di U_p$ implements the equivalence between $\pi$ and $\tilde{\pi}$. Now, the required family of unitaries, as in the statement, is simply given by $V_{\rho, p}=U_p$ for $p\in E_p$ and noting that the function $p\mapsto U_p^* \tilde{h}_p$ is a measurable vector in $\di H_p$ and we are done.
\end{proof}

A technical remark regarding Proposition \ref{mbl} is given below:

\begin{remark} \label{measure0}
The measure zero set $X_0$ is not unique in the disintegration of $\pi$ with respect to a fixed maximal abelian $\clL$. However, by uniqueness of the decomposition upto a set of measure zero (see \cite[Theorem 8.25]{Tak1}), we can demonstrate the unambiguity of our statement. Suppose $\tilde{X}_0$ is another such measure zero set in the disintegration of $\pi$ w.r.t. $\clL$. Let $$X \backslash \tilde{X}_0 = \coprod\tilde{E}_p,$$ where all notations are analogous to those defined earlier. Let $\tilde{\Lambda}$ denote the set of distinct classes of irreducible representations corresponding to $\{\tilde{E}_p\}$. Firstly, we note that (by adjusting the set of measure zero $\tilde{X}_0$ if necessary) both $\Lambda$ and $\tilde{\Lambda}$ have the same set of irreducible classes corresponding to $p\in X \backslash X_0 \cup \tilde{X}_0$ due to the uniqueness of the decomposition upto a set of measure zero. Let $\{\tilde{\rho}\}$ be an exhaustive set of representatives of $\tilde{\Lambda}$ and $\{\tilde{h}_{\tilde{\rho}}\}_{\tilde{\rho}\in\{\tilde{\rho}\}}$ be any tuple of unit vectors. Restricting to a subset of $\{\tilde{\rho}\}$ corresponding to $p\in X \backslash X_0 \cup \tilde{X}_0$, we consider a tuple $\{\beta\}$ from $\Lambda$ and unit vectors $\{h_\beta\}_{\beta\in \{\beta\}}$, corresponding to $p\in X \backslash X_0$, such that $\beta=\tilde{\rho}$ and $h_\beta=\tilde{h}_{\tilde{\rho}}$ for classes corresponding to $p\in X \backslash X_0 \cup \tilde{X}_0$. Now, assuming the condition in Proposition \ref{mbl} to be true, we obtain a family of unitaries $\{V^*_{\beta, p}: H_\beta \rightarrow H_p\}_{p\in E_p, \beta \in \{\beta\}}$ such that the function defined on $X\backslash X_0$ by $p\mapsto V^*_{\beta, p} h_\beta$ is a measurable vector and $V_{\beta, p} \pi_p V_{\beta, p}^*=\beta$ for all $p\in E_p$. Now restrict this measurable vector to $p\in X \backslash X_0 \cup \tilde{X}_0$ and suitably extend the restriction to $p\in X \backslash \tilde{X}_0$ by defining unitaries on the measure zero set $X_0 \backslash \tilde{X}_0$. So, we have obtained a family of unitaries $\{V^*_{\tilde{\rho}, p}: H_{\tilde{\rho}} \rightarrow H_p\}_{p\in \tilde{E}_p, \tilde{\rho} \in \{\tilde{\rho}\}}$ such that the vector function defined on $X\backslash \tilde{X}_0$ by $p\mapsto V^*_{\tilde{\rho}, p} h_{\tilde{\rho}}$ is measurable and $V_{\tilde{\rho}, p} \pi_p V_{\tilde{\rho}, p}^*=\tilde{\rho}$ for all $p\in \tilde{E}_p$. Thus the partition $X \backslash \tilde{X}_0 = \coprod\tilde{E}_p$ is measurable and our equivalent statements are unambiguous and valid, independent of the measure zero set $X_0$ considered for partitioning $X \backslash X_0$.  
\end{remark}

Now we are in a position to define a special class of maximal abelian \textit{decomposable} von-Neumann subalgebras of $\acomm$ by using \cite[Lemma 8.22]{Tak1}, Proposition \ref{mbl} and the discussions above:

\begin{definition} \label{dcmpL}
Let $\clL={\rm L}^\infty (X,\mu)$ be a maximal abelian von-Neumann subalgebra of $\acomm$. We say $\clL$ to be decomposable when the equivalent conditions of Proposition \ref{mbl} are true for the measure space $(X,\mu)$.
\end{definition}

Clearly, by \cite[Lemma 8.22]{Tak1} our definition of a \textit{decomposable} $\clL$ is independent of the choice of the standard measure space $(X,\mu)$. 

\begin{remark} \label{count}
Suppose $\pi:A \rightarrow B(H)$ be a representation of a separable unital C*-algebra $A$ on a separable Hilbert space $H$ such that $\acomm$ admits a decomposable maximal abelian von-Neumann subalgebra $\clL$. Then, by using Remark \ref{measure0} we can consider only those equivalence classes say $E_p$ for which $\mu(E_p)\neq 0$. Then by Proposition \ref{mbl}, we obtain a mutually orthogonal family of non-zero Hilbert space projections $\{P_\lambda\}_{\lambda\in \Lambda} \subset \clL$ where $P_\lambda=\chi_{E_p}\in {\rm L}^\infty (X,\mu)$ such that $E_p$ is the measurable equivalence class corresponding to the class of irreducible representation $\lambda\in \Lambda$. Clearly, this collection of mutually orthogonal projections is countable. We denote this distinguished family of projections by $\clP_\clL$.
\end{remark}

Next, we define decomposable completely positive maps for a given $\pi$ and maximal abelian von-Neumann subalgebra $\clL \subset \acomm$ as below:

\begin{definition} \label{dcp}
Let $\pi :A \rightarrow B(H)$ be a separable representation of a separable, unital C*-algebra $A$ such that $\acomm$ admit a decomposable maximal abelian von-Neumann subalgebra $\clL$. Let $\pi = \di \pi_p$ be the direct integral decomposition of $\pi$ w.r.t. $\clL$ and $\clR$ denote the von-Neumann algebra of decomposable operators on $\di H_p$. Let $\Phi : \clR \rightarrow \clR$ denote a completely positive map. Then, $\Phi$ is said to be decomposable if it satisfies:
$$\Phi (P_\lambda T)= P_\lambda \Phi (T)$$ for all $P_\lambda \in \clP_\clL$ and $T\in \clR$.
\end{definition}

Now we arrive at the notion of a decomposable weak expectation.

\begin{definition} \label{dwe}
Let $\pi :A \rightarrow B(H)$ be a separable representation of a separable, unital C*-algebra $A$ such that $\acomm$ admit a decomposable maximal abelian von-Neumann subalgebra $\clL$. Let $\clR$ denote the von-Neumann algebra of decomposable operators in $H$ w.r.t. $\clL$. A weak expectation $\Phi: \clR \rightarrow \adcom$ is said to be decomposable if it is also a decomposable completely positive map as in Definition \ref{dcp}.
\end{definition}

We emphasize here that, a decomposable weak expectation for a given representation (separable, unital) $\pi:A\rightarrow B(H)$ may only exist if the commutant of $\pi (A)$ admit a decomposable maximal abelian von-Neumann subalgebra $\clL$. We are now in a position to state the main theorem of this section.

\begin{theorem} \label{mt1}
Let $\pi :A \rightarrow B(H)$ be a separable representation of a separable, unital C*-algebra $A$ such that $\acomm$ admit a decomposable maximal abelian von-Neumann subalgebra $\clL$. Then, there exists a decomposable weak expectation of $\pi$ w.r.t. $\clL$ if and only if $\clP_\clL \subset \adcom$.
\end{theorem}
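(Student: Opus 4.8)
The plan is to handle the two implications separately, the forward (necessity) direction being essentially immediate and the converse requiring an explicit block-by-block construction.

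For necessity, I would suppose a decomposable weak expectation $\Phi\colon\clR\to\adcom$ exists and simply evaluate it on the projections $P_\lambda\in\clP_\clL$. Each $P_\lambda$ lies in $\clL$, and since the diagonalizable operators are decomposable we have $\clL=\clR\cap\clR'=Z(\clR)\subset\clR$; hence $P_\lambda\in\clR$ and the decomposability relation $\Phi(P_\lambda T)=P_\lambda\Phi(T)$ applies. Taking $T=I$ and using unitality gives $\Phi(P_\lambda)=P_\lambda\Phi(I)=P_\lambda$. Because the range of $\Phi$ is contained in $\adcom$, this forces $P_\lambda\in\adcom$, so $\clP_\clL\subset\adcom$.

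For sufficiency, assume $\clP_\clL\subset\adcom$. I first note that $P_\lambda\in\clL\subset\acomm$ together with $P_\lambda\in\adcom$ gives $P_\lambda\in\acomm\cap\adcom=Z(\adcom)$, so $\clP_\clL$ is a central partition of the identity in $\clR$ (where $\clL=Z(\clR)$) as well as in $\adcom$; by Remark \ref{count} we have $\sum_\lambda P_\lambda=I$. This yields the central decompositions $\clR=\bigoplus_\lambda P_\lambda\clR$ and $\adcom=\bigoplus_\lambda P_\lambda\adcom$, with $P_\lambda\adcom=(\pi|_{E_\lambda}(A))^{\prime\prime}$ on $P_\lambda H=\di_{E_\lambda}H_p$. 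The role of the decomposability hypothesis is precisely to trivialise each block: by Proposition \ref{mbl} there is on each $E_\lambda$ a measurable field of unitaries $U_p\colon H_p\to H_\rho$ with $U_p\pi_p U_p^*=\rho$ for $p\in E_\lambda$, so that $W_\lambda=\di_{E_\lambda}U_p$ carries $\pi|_{E_\lambda}$ to the constant field $I\otimes\rho$. Under $W_\lambda$ one obtains the identifications $P_\lambda\clR\cong{\rm L}^\infty(E_\lambda)\overline{\otimes}B(H_\rho)$, $P_\lambda\pi(a)\cong I\otimes\rho(a)$, and (using $\rho(A)^{\prime\prime}=B(H_\rho)$ by irreducibility) $P_\lambda\adcom\cong\bbC\otimes B(H_\rho)$, the algebra of constant fields.

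With the blocks in product form I would define the expectation by a slice map on each block. Fixing the normalised integral $\nu_\lambda(f)=\mu(E_\lambda)^{-1}\int_{E_\lambda}f\,d\mu$ as a normal state on ${\rm L}^\infty(E_\lambda)$, let $\Phi_\lambda$ be the slice map $\nu_\lambda\overline{\otimes}\id$ followed by re-embedding $B(H_\rho)$ as the constant field; concretely $\Phi_\lambda$ sends a decomposable field $(T_p)_{p\in E_\lambda}$ to the constant field with value $\mu(E_\lambda)^{-1}\int_{E_\lambda}U_pT_pU_p^*\,d\mu(p)$, transported back by $W_\lambda$. Each $\Phi_\lambda$ is a unital completely positive map with range in $P_\lambda\adcom$ that fixes $I\otimes\rho(a)$. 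I then assemble $\Phi(T)=\bigoplus_\lambda\Phi_\lambda(P_\lambda T)$; as the summands $P_\lambda\adcom$ are mutually orthogonal central pieces and each $\Phi_\lambda$ is contractive, the sum defines a unital completely positive map $\Phi\colon\clR\to\adcom$. Using $\sum_\lambda P_\lambda=I$ one checks $\Phi(\pi(a))=\bigoplus_\lambda P_\lambda\pi(a)=\pi(a)$, and orthogonality $P_\mu P_\lambda=\delta_{\mu\lambda}P_\lambda$ gives $\Phi(P_\mu T)=\Phi_\mu(P_\mu T)=P_\mu\Phi(T)$, so $\Phi$ is a decomposable weak expectation.

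I expect the main obstacle to lie in the third paragraph: extracting from the decomposability hypothesis a genuinely measurable field of intertwining unitaries $U_p$ (rather than merely the pointwise equivalences), so that each $W_\lambda$ is a bona fide unitary and the block identifications $P_\lambda\clR\cong{\rm L}^\infty(E_\lambda)\overline{\otimes}B(H_\rho)$ are legitimate. This is exactly the content encoded in Proposition \ref{mbl} and Remark \ref{measure0}, and care is needed to confirm that the transported slice maps are well defined and that the assembled $\Phi$ genuinely ranges in $\adcom$ and not merely in $\clR$.
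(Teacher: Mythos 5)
Your proof is correct and takes essentially the same route as the paper: the necessity argument is identical, and your sufficiency argument is the paper's block decomposition of $\clR$ and $\adcom$ over the countable central partition $\clP_\clL$, followed on each block by the slice-map construction of Proposition \ref{intwep} (you simply fix the normalized integral as the state on ${\rm L}^\infty(E_\lambda)$, where the paper allows an arbitrary state in $L^1(E_\lambda,\mu)$). The identification $P_\lambda\adcom\cong\bbC\otimes B(H_\rho)$ that you flag as the delicate point is exactly the content of the paper's Lemma \ref{dcom}, so your construction matches the paper's proof in substance.
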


We defer the proof until the end of this section, but before that we record two important observations required for the proof of Theorem \ref{mt1}.

Let $(X,\mu)$ be a standard measure space. Let $\{\pi_p: A \rightarrow B(H_p)\}_{p\in X}$ be a measurable family of unital separable representations of $A$ such that, any two representations of the family are unitarily equivalent. Let $\sigma : A\rightarrow B(H_\sigma)$ be a representation such that $\pi_p \simeq \sigma$ for all $p\in X$. Let $\pi:=\di \pi_p$ denote the direct integral representation of $A$ on $H= \di H_p$. Then the von-Neumann algebra $\adcom$ is described as follows.

\begin{lemma} \label{dcom}
Let $\sigma :A \rightarrow B(H_\sigma)$ and $\pi:A\rightarrow B(H)$ be as described above. Then we have $$\pi(A)^{\prime \prime} = \left\{ \di U_p^*SU_p : S \in \sigma(A)^{\prime\prime}, p\mapsto U_p \right\}$$ where the function $\{p\mapsto U_p\}$ is a measurable unitary operator valued function such that $U_p^* \sigma U_p = \pi_p$ for a.e. $p\in X$.
\end{lemma}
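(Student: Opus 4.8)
The plan is to realize $\pi$ as an amplification of the fixed representation $\sigma$ and then read off its bicommutant through the commutation theorem for von Neumann tensor products. First I would invoke the measurable-selection principle already used in Proposition \ref{mbl}, namely \cite[Theorem 8.28]{Tak1}, to produce a measurable field of unitaries $p\mapsto U_p$ with $U_p:H_p\to H_\sigma$ (defined everywhere, by assigning arbitrary unitaries on the null set off which $\pi_p\simeq\sigma$ fails) satisfying $U_p^*\sigma U_p=\pi_p$, equivalently $U_p\pi_p(a)U_p^*=\sigma(a)$, for a.e.\ $p$. Assembling these fibrewise gives a decomposable unitary $U=\di U_p:\di H_p\to\di H_\sigma~d\mu(p)$. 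Identifying $\di H_\sigma~d\mu(p)$ with $L^2(X,\mu)\overline{\otimes}H_\sigma$, the constant field $\di\sigma(a)$ becomes $1_{L^2(X,\mu)}\otimes\sigma(a)$, so that $U\pi(a)U^*=1\otimes\sigma(a)$ for every $a\in A$. Consequently $U\,\adcom\,U^*$ is exactly the von Neumann algebra generated by $\{1\otimes\sigma(a):a\in A\}$.

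The algebraic heart of the argument is then to identify this generated algebra. It equals $\bbC 1_{L^2(X,\mu)}\overline{\otimes}\sigma(A)^{\prime\prime}=\{1\otimes S:S\in\sigma(A)^{\prime\prime}\}$, a standard computation: by the commutation theorem one has $(\bbC 1\,\overline{\otimes}\,\sigma(A)^{\prime\prime})^{\prime}=B(L^2(X,\mu))\,\overline{\otimes}\,\sigma(A)^{\prime}$, and a second commutant returns $\bbC 1\,\overline{\otimes}\,\sigma(A)^{\prime\prime}$; alternatively one verifies directly via a weak-operator density argument that every $1\otimes S$ with $S\in\sigma(A)^{\prime\prime}$ lies in the weak closure of $1\otimes\sigma(A)$. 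Either route yields $U\,\adcom\,U^*=\{1\otimes S:S\in\sigma(A)^{\prime\prime}\}$.

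Conjugating back by $U$ gives $\adcom=\{U^*(1\otimes S)U:S\in\sigma(A)^{\prime\prime}\}$, and since $U=\di U_p$ acts fibrewise we have $U^*(1\otimes S)U=\di U_p^*SU_p$, which is precisely the asserted description. To justify keeping $p\mapsto U_p$ as a free variable inside the set (rather than fixing one field), I would note that if $p\mapsto V_p$ is any other measurable unitary field with $V_p^*\sigma V_p=\pi_p$, then $V=\di V_p$ again satisfies $V\pi(a)V^*=1\otimes\sigma(a)$ by the identical computation; hence $V^*(1\otimes S)V=\di V_p^*SV_p$ ranges over the whole of $\adcom$ as $S$ ranges over $\sigma(A)^{\prime\prime}$, so the displayed set is the same for every admissible field and equals $\adcom$.

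The step I expect to be the main obstacle is not the algebra but the measure-theoretic input: one must know that the implementing unitaries can be chosen to form a genuine \emph{measurable} field, so that $U=\di U_p$ is a well-defined decomposable unitary. This is exactly the content of the measurable unitary-equivalence statement \cite[Theorem 8.28]{Tak1}, applied after discarding the null set where equivalence fails. Once the constant field is in place, the identification $\di H_\sigma~d\mu\cong L^2(X,\mu)\overline{\otimes}H_\sigma$ together with the matching of $\di\sigma(a)$ with $1\otimes\sigma(a)$ and of $U^*(1\otimes S)U$ with $\di U_p^*SU_p$ is routine.
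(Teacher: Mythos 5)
Your proof is correct, but it takes a genuinely different route from the paper's. The paper argues fibrewise: given $T=\di T_p\in\adcom$ of norm one, it uses the Kaplansky density theorem to produce a bounded sequence $\pi(a_n)\to T$ strongly, then invokes \cite[Part II, Chapter 2, Proposition 4(i)]{DixV} to extract a subsequence with $\pi_p(a_{n_k})\to T_p$ strongly for a.e.\ $p$; conjugating by $U_p$ and observing that the strong limit of $\sigma(a_{n_k})$ cannot depend on $p$ forces $U_pT_pU_p^*$ to equal a single $S\in\sigma(A)^{\prime\prime}$ a.e. The reverse inclusion is the mirror argument via part (ii) of the same proposition. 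You instead trivialize the field: conjugate by the decomposable unitary $U=\di U_p$ to reduce to the constant field $1\otimes\sigma$ on $L^2(X,\mu)\overline{\otimes}H_\sigma$, and identify the generated von Neumann algebra as $\bbC 1\overline{\otimes}\sigma(A)^{\prime\prime}$ via the commutation theorem (here only the elementary special case is needed, since one tensor factor is $\bbC 1$). Your route is arguably cleaner and yields for free the additional facts that the displayed set does not depend on the choice of admissible unitary field and that $S\mapsto\di U_p^*SU_p$ is a normal isomorphism of $\sigma(A)^{\prime\prime}$ onto $\adcom$; it is also consonant with the computation $U\left(\di B(H_p)\right)U^*=L^\infty(X,\mu)\overline{\otimes}B(H_\sigma)$ that the paper itself performs in Proposition \ref{intwep}. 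What the paper's fibrewise argument buys is independence from the tensor-product commutation theorem, working only with the measurable-field machinery and a.e.\ convergence of subsequences. Both proofs rest on the same measure-theoretic input, \cite[Theorem 8.28]{Tak1}, to obtain a measurable field of implementing unitaries, which you correctly identify as the essential nontrivial step.
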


\begin{proof}
First, note that, $\adcom \subset \di B(H_p)$ by construction. Let $$T=\di T_p \in \adcom ,$$ and $\|T\| = 1$. By the Kaplansky density theorem, the unit ball of $\pi (A)$ is strongly dense in the unit ball of $\adcom$. Using the separability of $H$, choose a sequence $\{ a_n \}$ of $A$ such that $\pi(a_n) \rightarrow T$ in the strong operator topology and $\| \pi(a_n)\| \leq 1$. Then, by \cite[Part II, Chapter 2, Proposition 4(i)]{DixV}, there exists a subsequence $\{a_{n_k}\}$ such that $\pi_p(a_{n_k}) \rightarrow T_p$ in strong topology for a.e $p\in X$. So evidently, by the unitary equivalence of $\pi_p$ and $\sigma$ by $U_p$ (see \cite[Theorem 8.28]{Tak1}), we have $\sigma (a_{n_k}) \rightarrow U_p T_p U_p^*$ for a.e. $p\in X$. By the uniqueness of the strong limit, we obtain a.e. $U_p T_p U_p^* = S \in \sigma(A)^{\prime\prime}$. Therefore, $T\in \adcom$ is of the form $\di U_p^*SU_p$ for some $S\in \sigma(A)^{\prime\prime}$ and a measurable function $p\mapsto U_p$ such that $U_p^* \sigma U_p = \pi_p$ for a.e. $p\in X$. 
	
Conversely, suppose $T=\di U_p^*SU_p$ as in the statement of the lemma. If $S=\sigma (a)$ for some $a\in A$, then by hypothesis, $T=\di \pi_p (a) \in \adcom$. Otherwise, suppose $S\in \sigma(A)^{\prime\prime}$ and $\|S\|=1$. By Kaplansky density and separability of $H_\sigma$, choose a sequence $\{a_n\} \subset A$ such that $\sigma(a_n) \rightarrow S$ in strong topology and $\| \sigma(a_n)\| \leq 1$. So we have $\pi_p (a_n) = U_p^* \sigma (a_n) U_p \rightarrow U_p^* S U_p$ in strong topology for a.e. $p\in X$. Let $T_n:= \di \pi_p (a_n)$, then $\| T_n \|\leq 1$ for all $n$ and $T_n (p)\rightarrow T_p=U_p^* S U_p$ strongly for a.e. $p\in X$. By \cite[Part II, Chapter 2, Proposition 4(ii)]{DixV} we have that $T_n \rightarrow T$ strongly and thus $T\in \adcom$. This completes the proof.
\end{proof}

Let $(X,\mu)$, $\{\pi_p\}$, $\pi$ and $\sigma$ be as stated before Lemma \ref{dcom}. We have the following result in regard to existence of weak expectations of $\pi$.

\begin{proposition} \label{intwep}
Let $(X,\mu)$, $\{\pi_p\}$, $\pi$ and $\sigma$ be as before. If $\sigma :A \rightarrow B(H_\sigma)$ admits a weak expectation then so does $\pi=\di \pi_p$.
\end{proposition}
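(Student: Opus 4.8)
The plan is to reduce, via Proposition \ref{red}, the construction of a weak expectation for $\pi$ to producing a unital completely positive map $\Phi:\clR\to\adcom$ with $\Phi(\pi(a))=\pi(a)$ for all $a\in A$ (here $\clL=\mathrm{L}^\infty(X,\mu)$ is the algebra of diagonalisable operators, which lies in $\acomm$ by Theorem \ref{RL}), and then to build such a $\Phi$ by transporting the given weak expectation of $\sigma$ along the fibrewise unitary equivalences. Concretely, let $\Psi:B(H_\sigma)\to\sigma(A)^{\prime\prime}$ be a weak expectation for $\sigma$, so that $\Psi$ is unital completely positive and $\Psi(\sigma(a))=\sigma(a)$ for all $a\in A$. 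By the measurable selection of intertwiners used in Lemma \ref{dcom} (see \cite[Theorem 8.28]{Tak1}), there is a measurable unitary field $p\mapsto U_p$ with $U_p^*\sigma U_p=\pi_p$ a.e.; set $U=\di U_p$, a unitary from $H=\di H_p$ onto the constant field $\di_X H_\sigma\cong H_\sigma\otimes L^2(X,\mu)$. Then $U\pi U^*=\di_X\sigma=\sigma\otimes 1$, while $U\clR U^*=\di_X B(H_\sigma)\cong B(H_\sigma)\,\overline{\otimes}\,\mathrm{L}^\infty(X,\mu)$ and, by Lemma \ref{dcom}, $U\adcom U^*=\sigma(A)^{\prime\prime}\otimes 1$. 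Thus it suffices to produce a weak expectation for the constant field $\sigma\otimes 1$, i.e. a unital completely positive map $\Phi_0:B(H_\sigma)\,\overline{\otimes}\,\mathrm{L}^\infty(X,\mu)\to\sigma(A)^{\prime\prime}\otimes 1$ with $\Phi_0(\sigma(a)\otimes 1)=\sigma(a)\otimes 1$, and then conjugate back by $U$.

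To build $\Phi_0$, first average out the $\mathrm{L}^\infty(X,\mu)$ variable. Since $\mu$ is bounded we may assume $\mu(X)=1$; let $\phi$ be the corresponding state on $\mathrm{L}^\infty(X,\mu)$ and let $E=\id_{B(H_\sigma)}\otimes\phi$ be the associated slice map, so that for $T=\di_X T_p$ one has $\langle E(T)\xi,\eta\rangle=\int_X\langle T_p\xi,\eta\rangle\,d\mu(p)$ for all $\xi,\eta\in H_\sigma$. This $E$ is a normal unital completely positive conditional expectation of $B(H_\sigma)\,\overline{\otimes}\,\mathrm{L}^\infty(X,\mu)$ onto the constant fields $B(H_\sigma)\otimes 1$, and it is the identity on constant fields; in particular $E(\sigma(a)\otimes 1)=\sigma(a)$. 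Now set $\Phi_0=\Psi\circ E$, viewed as a map into $\sigma(A)^{\prime\prime}\otimes 1$. As a composition of unital completely positive maps, $\Phi_0$ is unital completely positive, and $\Phi_0(\sigma(a)\otimes 1)=\Psi(\sigma(a))\otimes 1=\sigma(a)\otimes 1$, so $\Phi_0$ is the desired weak expectation for $\sigma\otimes 1$.

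Finally define $\Phi:\clR\to\adcom$ by $\Phi(T)=U^*\Phi_0(UTU^*)U$. Being a conjugate of a unital completely positive map, $\Phi$ is unital completely positive; it maps $\clR$ into $\adcom$ because $\Phi_0$ takes values in $\sigma(A)^{\prime\prime}\otimes 1=U\adcom U^*$; and for $a\in A$ we have $\Phi(\pi(a))=U^*\Phi_0(\sigma(a)\otimes 1)U=U^*(\sigma(a)\otimes 1)U=\pi(a)$. Hence $\Phi$ is a weak expectation of $\pi$ on $\clR$, and Proposition \ref{red} yields a weak expectation of $\pi$, as required. The one point that needs genuine care---the main technical obstacle---is the second step: verifying that the averaging slice map $E$ is well defined (the weak integral determines a bounded operator in $B(H_\sigma)$ of norm at most $\|T\|$), normal, and completely positive onto the constant fields. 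This is where the boundedness of $\mu$ and the direct-integral structure are used; everything else is transport of structure along $U$ together with composition of completely positive maps.
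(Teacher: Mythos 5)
Your proposal is correct and follows essentially the same route as the paper: conjugate by the direct integral $U=\di U_p$ of the measurable unitary field intertwining $\pi_p$ with $\sigma$, slice out the $\mathrm{L}^\infty(X,\mu)$ variable with a state (the paper takes an arbitrary state $\omega\in L^1(X,\mu)$, you take the normalized measure itself), compose with the given weak expectation of $\sigma$, and use Lemma \ref{dcom} to see that the result lands in $\adcom$. The only cosmetic difference is that you phrase the target as the constant field $\sigma(A)^{\prime\prime}\otimes 1$ while the paper writes the same map as $T\mapsto U^*\bigl[\di(\Phi\circ(\omega\otimes\mathrm{id}))(UTU^*)\bigr]U$.
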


\begin{proof}
Let $\Phi: B(H_\sigma)\rightarrow \sigma(A)^{\prime\prime}$ be a weak expectation of $\sigma$. By \cite[Theorem 8.28]{Tak1}, there exists a measurable unitary operator valued function $p\rightarrow U_p$ such that the unitary $U=\di U_p$ exhibits the unitary equivalence between $\pi$ and $\di \sigma$. Therefore$$U \left(\di B(H_p)\right)U^*=L^\infty (X,\mu) \bar{\otimes} B(H_\sigma).$$ Now, consider the completely positive map $$\omega \otimes {\rm id}: L^\infty (X,\mu) \bar{\otimes} B(H_\sigma) \rightarrow B(H_\sigma),$$ where $\omega \in L^1(X,\mu)$ is a state. Then the unital completely positive map $\Phi_\pi$ given by: $$T\mapsto U^*\left[\di (\Phi \circ (\omega \otimes {\rm id}))(UTU^*)\right]U$$ for $T\in\di B(H_p)$, is such that: $${\rm range~} \Phi_\pi \subset \adcom$$ by Lemma \ref{dcom} above, and $$\Phi_\pi (\pi(a)) = \pi(a)$$ for all $a\in A$, since $U\pi(a)U^*=1 \otimes \sigma (a) \in L^\infty (X,\mu) \bar{\otimes} B(H_\sigma)$. So, $$\Phi_\pi: \di B(H_p) \rightarrow \adcom$$ is a weak expectation of $\pi$ and our claim is proved.
\end{proof}

Now we can give the proof of the main theorem.

\subsection*{Proof of Theorem \ref{mt1}} The necessary condition simply follows from the definition of a decomposable weak expectation with respect to the maximal abelian von-Neumann subalgebra $\clL$. Let $\Phi: \clR \rightarrow \adcom$ be a decomposable weak expectation. Then, for $P_\lambda \in \clP_\clL$, $\Phi (P_\lambda)=P_\lambda \Phi (1)=P_\lambda \in \adcom$ and the necessity is proved.

Conversely, let $\clP_\clL \subset \adcom$. Let $\pi=\di \pi_p$ be the disintegration of $\pi$ with respect to $\clL$. Let $\{\rho_\lambda\}$ be a representative of $\Lambda$ and denote by $\{E_\lambda\}_{\lambda\in \Lambda}$ the measurable partition of $X\backslash X_0$ as before. Here, we have slightly changed notation to the effect that: $$E_\lambda :=\{ p\in X\backslash X_0| \pi_p \simeq \rho_\lambda\}$$ and $P_\lambda=\chi_{E_\lambda}$. By Remark \ref{count}, $\clP_\clL$ is a countable set and the strong sum $\sum P_\lambda$ is a well defined partition of unity in $\clL$. So we can write $\pi=\prod \pi_\lambda$ where $$\pi_\lambda:=P_\lambda \pi = \di_{E_\lambda} \pi_p$$ and by our hypothesis, $$\adcom=\prod \pi_\lambda (A)^{\prime\prime}.$$ By the definition of $E_\lambda$ and the fact that $\{\rho_\lambda\}$ are irreducible, we see that $\pi_\lambda$ admits a weak expectation, say $$\Phi_{\pi_\lambda} : \di_{E_\lambda} B(H_p) \rightarrow \pi_\lambda (A)^{\prime\prime}$$ by virtue of Proposition \ref{intwep}. Finally to conclude our proof, we simply note that the unital completely positive map $$\prod_{\lambda} \Phi_{\pi_\lambda} : \clR \rightarrow \adcom$$ is a decomposable weak expectation of $\pi$ on $\clR$. $\hfill\Box$\\

The motivation for the construction of a decomposable weak expectation stems from a desire to generalize Lance's construction of weak expectations for direct sums of representations, see \cite[Proposition 2.10]{L}. Our result describes a construction of a weak expectation in the direct integral case where a decomposable maximal abelian von-Neumann subalgebra of the commutant of the represented algebra exists.

We conclude this section by demonstrating that unitary equivalence respects the existence of a decomposable weak expectation. 

\begin{proposition} \label{ueq}
Let $A$ be a unital, separable C*-algebra and let $\pi_1$ and $\pi_2$ be unitarily equivalent non-degenerate representations of $A$ on Hilbert spaces $H_1$ and $H_2$ respectively. If $\pi_1(A)^\prime$ admit a decomposable maximal abelian von-Neumann subalgebra, then so does $\pi_2(A)^\prime$. Further, if $\pi_1$ admit a decomposable weak expectation, then so does $\pi_2$.
\end{proposition}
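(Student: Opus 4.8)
The plan is to transport every structure through the implementing unitary $W\colon H_1\to H_2$, which satisfies $\pi_2(a)=W\pi_1(a)W^*$ for all $a\in A$, and to observe that this conjugation carries the entire disintegration of $\pi_1$ to that of $\pi_2$ verbatim. First I would record the elementary facts that $\mathrm{Ad}_W\colon T\mapsto WTW^*$ is a unital $*$-isomorphism of $B(H_1)$ onto $B(H_2)$ carrying $\pi_1(A)'$ onto $\pi_2(A)'$ and $\pi_1(A)''$ onto $\pi_2(A)''$. Setting $\clL_2:=W\clL_1W^*$ for a decomposable maximal abelian $\clL_1\subset\pi_1(A)'$, it is then immediate that $\clL_2$ is abelian, sits inside $\pi_2(A)'$, and is maximal abelian there, since maximality is preserved by the $*$-isomorphism $\mathrm{Ad}_W$.

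The crux is decomposability of $\clL_2$. Let $U_1\colon H_1\to\di_X H_p$ be the disintegration unitary of Theorem \ref{dint} for $\pi_1$ relative to $\clL_1$, so that $U_1\clL_1U_1^*$ is the diagonalizable algebra and $U_1\pi_1(a)U_1^*=\di_X\pi_p(a)$. Putting $U_2:=U_1W^*\colon H_2\to\di_X H_p$, a direct computation gives $U_2\clL_2U_2^*=U_1\clL_1U_1^*$ and $U_2\pi_2(a)U_2^*=U_1\pi_1(a)U_1^*=\di_X\pi_p(a)$. Thus the disintegration of $\pi_2$ with respect to $\clL_2$ may be taken over the same measure space $(X,\mu)$, with the same fibre Hilbert spaces $\{H_p\}$ and the same fibre representations $\{\pi_p\}$. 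Consequently the equivalence classes $E_p$, the class set $\Lambda$, the representatives $\{\rho\}$, the vectors $\{h_\rho\}$ and the intertwining unitaries $V_{\rho,p}$ of Proposition \ref{mbl} are literally unchanged, so the equivalent conditions there hold for $\clL_2$ precisely because they hold for $\clL_1$; hence $\clL_2$ is decomposable. In particular the distinguished projection families satisfy $\clP_{\clL_2}=W\clP_{\clL_1}W^*$, since each $P_\lambda=\chi_{E_\lambda}$ is the same diagonalizable projection in both pictures.

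For the final assertion, let $\clR_i$ denote the decomposable operators relative to $\clL_i$; by Theorem \ref{RL} one has $\clR_i=\clL_i'$, whence $\clR_2=W\clR_1W^*$. Given a decomposable weak expectation $\Phi_1\colon\clR_1\to\pi_1(A)''$, I would define $\Phi_2\colon\clR_2\to\pi_2(A)''$ by $\Phi_2(T):=W\,\Phi_1(W^*TW)\,W^*$. This is unital completely positive as a composition of $\Phi_1$ with the $*$-isomorphisms $\mathrm{Ad}_{W^*}$ and $\mathrm{Ad}_W$, it lands in $W\pi_1(A)''W^*=\pi_2(A)''$, and it satisfies $\Phi_2(\pi_2(a))=W\Phi_1(\pi_1(a))W^*=\pi_2(a)$, so it is a weak expectation. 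Finally, for $P_\lambda^{(2)}=WP_\lambda^{(1)}W^*\in\clP_{\clL_2}$ and $T\in\clR_2$ one computes $\Phi_2(P_\lambda^{(2)}T)=W\Phi_1(P_\lambda^{(1)}W^*TW)W^*=WP_\lambda^{(1)}\Phi_1(W^*TW)W^*=P_\lambda^{(2)}\Phi_2(T)$, using decomposability of $\Phi_1$, so $\Phi_2$ is decomposable. The only genuine subtlety, and the step I would treat most carefully, is the verification that the disintegration transports verbatim, culminating in the identity $\clP_{\clL_2}=W\clP_{\clL_1}W^*$; once that is in hand the remaining verifications are purely formal.
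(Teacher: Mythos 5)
Your argument is correct, and it reaches the conclusion by a genuinely more economical route than the paper. The paper disintegrates $\pi_1$ and $\pi_2$ over two a priori different standard measure spaces $(X_1,\mu_1)$ and $(X_2,\mu_2)$, composes the various unitaries into a single unitary $\tilde U$ between the two direct integrals, and then invokes Dixmier's point-realization theorem to produce a Borel isomorphism $\eta:X_1\backslash Y_1\to X_2\backslash Y_2$ together with a measurable field of fibre unitaries $V_p$ intertwining $\pi_{1p}$ with $\pi_{2\eta(p)}$; decomposability of $\clL_2$ and the inclusion $\clP_{\clL_2}\subset\pi_2(A)''$ are then read off from the identification $\eta(E_\lambda)=E'_\lambda$, and the weak expectation is obtained by re-applying Theorem \ref{mt1}. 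You avoid Dixmier's theorem entirely by the observation that $U_2:=U_1W^*$ is itself a disintegration unitary for $\pi_2$ relative to $\clL_2$ over the \emph{same} $(X,\mu)$ with the same fibres, so that the data of Proposition \ref{mbl} transport verbatim; this is legitimate precisely because the paper has already recorded (after Definition \ref{dcmpL}, via \cite[Lemma 8.22]{Tak1}) that decomposability is independent of the chosen standard measure realization. Your second half is also cleaner: rather than routing back through Theorem \ref{mt1}, you conjugate the given decomposable weak expectation by $W$ and check the defining properties directly, which in particular exhibits an explicit map $\Phi_2$ rather than merely asserting existence. The only small point worth stating carefully is the identification $\clR_i=\clL_i'$: Theorem \ref{RL} gives $\clR'=L^\infty(X,\mu)$, so you should take commutants once (using that $\clR$ is a von Neumann algebra) before pulling back to $B(H_i)$; with that said, $\clR_2=W\clR_1W^*$ follows and the rest of your verification is, as you say, purely formal.
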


\begin{proof}
Let $U: H_1 \rightarrow H_2$ be the unitary such that $U\pi_1 U^*=\pi_2$. Let $\clL_1$ be a maximal abelian von-Neumann subalgebra of $\pi_1(A)^\prime$. Then $\clL_2:=U \clL_1 U^*$ is a maximal abelian von-Neumann subalgebra of $\pi_2(A)^\prime$. Now, consider the disintegration of $\pi_1$ and $\pi_2$ w.r.t. $\clL_1$ and $\clL_2$ respectively. Identifying the maximal abelian subalgebras with diagonalizable operators and composing the appropriate unitaries, we see that there is a unitary $$\tilde{U}: \di_{X_1} H_{1p} ~{\rm d} \mu_1 (p) \rightarrow \di_{X_2} H_{2q} ~{\rm d} \mu_2 (q)$$ such that $$\tilde{U}[ L^\infty (X_1,\mu_1)] \tilde{U}^* = L^\infty (X_2,\mu_2)$$ and $$\tilde{U} \left(\di_{X_1} \pi_{1p} ~{\rm d} \mu_1 (p)\right) \tilde{U}^* = \di_{X_2} \pi_{2q} ~{\rm d} \mu_2 (q).$$ By \cite[Part II, Chapter 6, Theorem 4]{DixV}, there exists a Borel isomorphism $\eta$ (modulo measure zero sets $Y_1$ and $Y_2$) from $X_1 \backslash Y_1$ onto $X_2 \backslash Y_2$ such that $$\tilde{U}=\di_{X_1 \backslash Y_1} V_p$$ where $V_p: H_{1p} \rightarrow H_{2\eta (p)}$ are unitaries such that $V_p \pi_{1p} V_p^* = \pi_{2 \eta(p)}$ for all $p\in X_1 \backslash Y_1$. In particular, if $0\neq\chi_{E} \in L^\infty (X_1,\mu_1)$ then $\tilde{U} \chi_{E} \tilde{U}^* = \chi_{\eta (E)} \in L^\infty (X_2,\mu_2)$ (upto a set of measure zero) and $\chi_{\eta (E)} \neq 0$.
	
Now, let $\clL_1$ be decomposable. Following the notations described earlier in this section, let $\chi_{E_\lambda} \in \clP_{\clL_1}$ for $\lambda \in \Lambda^1$. Using the family of unitaries $\{V_p\}$ we see that $\eta (E_\lambda)$ is a measurable equivalence class of $(X_2,\mu_2)$, $0 \neq \chi_{\eta({E_\lambda})} \in \clP_{\clL_2}$ and by slight abuse of notation (upto a set of measure zero) $\coprod \eta (E_\lambda)$ is a measurable partition of $X_2$. Note that, this partition corresponds precisely to the equivalence class partition of $$X_2\backslash Y_2=\coprod_{\lambda \in \Lambda^2_{\rm ess}} E^\prime_\lambda,$$ where $E^\prime_\lambda$ is the equivalence class of $q\in X_2\backslash Y_2$ such that $\pi_{2q}\simeq \lambda$. Clearly, by the argument above, we have $\Lambda^2=\Lambda^1$. This proves that, $\clL_2$ is decomposable. Then, by the unitary equivalence $\clL_2:=U \clL_1 U^*$, we see that if $\clP_{\clL_1}\subset \pi_1(A)^{\prime\prime}$ then $\clP_{\clL_2}\subset \pi_2(A)^{\prime\prime}$. Finally, an application of Theorem \ref{mt1} completes the proof.
\end{proof}

\section{Application: Decomposable Measures and the GNS Representation of a State}

The purpose of this section is to investigate the existence of decomposable weak expectations of the GNS representation of a given state of a separable unital C*-algebra. 

Let $A$ denote a separable unital C*-algebra and $\omega \in S(A)$. Then, every abelian von-Neumann subalgebra of $\pi_\omega(A)^{\prime}$ corresponds uniquely to a standard orthogonal probability measure $\mu$ defined on the Borel $\sigma$-algebra of $S(A)$ (by the separability assumption on $A$), such that, $\omega$ is the barycenter of $\mu$, i.e. $$\omega=\int_{S(A)} \omega^\prime~ {\rm d}\mu(\omega^\prime)$$ and by Theorem \ref{eff}, $\pi_\omega$ disintegrates as $$\pi_\omega=\di_{S(A)} \pi_{\omega^\prime}~ {\rm d}\mu(\omega^\prime)$$ with the measurable integrand $\omega^\prime \rightarrow \pi_{\omega^\prime}$, see \cite[Chapter 4]{OB1} for details.

Owing to the separability of $A$, the state space $S(A)$ is metrizable and therefore the set of extreme points of $S(A)$ is Borel measurable. The abelian von-Neumann subalgebra is maximal abelian if and only if the measure $\mu$ is maximal (see \cite[Theorem 4.2.2]{OB1})  and is supported on the set of pure states of $A$, denoted by $PS(A)$, which are precisely the set of extreme points of the state space $S(A)$.

Let $\omega \in S(A)$. Consider the standard probability measure space $(X,\mu)$ where $X={\overline{PS(A)}}^{wk^*}$ and $\mu$ is maximal orthogonal and has barycenter $\omega$ as described above. Further, let $\mu$ correspond to a maximal abelian von-Neumann subalgebra $\clL_\mu$ of $\pi_\omega(A)^{\prime}$ and $\pi_\omega$ disintegrates as above over $X$. 

Let $X_0:= X\backslash PS(A)$ and for $\omega^\prime \in PS(A)$, define the equivalence class: $$E_{\omega^\prime}:=\{\beta \in PS(A)| \pi_\beta \simeq \pi_{\omega^\prime}\}.$$ Clearly, $\mu (X_0)=0$ and $\clL_\mu$ is a \textit{decomposable} maximal abelian von-Neumann subalgebra if and only if $$PS(A)=\coprod_{\omega^\prime\in PS(A)} E_{\omega^\prime}$$ is a countable measurable partition.

Let us quickly recall that, since $\mu$ is an orthogonal measure, for any measurable subset $E$ of $X$, there exists a projection $P_E \in \pi_\omega (A)^{\prime}$ such that:$$\omega_E:=\int_E \omega^\prime \perp \int_{X\backslash E} \omega^\prime =: \omega_{X\backslash E}$$ and $\omega_E =\la \pi_\omega (\cdot)\xi_\omega, P_E \xi_\omega \ra$ and $\omega_{X\backslash E} =\la \pi_\omega (\cdot)\xi_\omega, (I-P_E) \xi_\omega \ra$, where $\xi_\omega \in H_\omega$ is the cylic vector for $\pi_\omega$. It is easy to see that, upto unitary identification, in the direct integral representation $P_E=\chi_E$. The measure $\mu$ is called \textit{disjoint} if $P_E\in \pi_\omega (A)^{\prime\prime}$ for \textit{any} measurable subset $E$ of $X$ i.e. if $\pi_{\omega_E}$ and $\pi_{\omega_{X\backslash E}}$ are disjoint representations. Disjoint measures correspond to subcentral decompositions of $\pi_\omega$, see \cite[Section 4.2.2]{OB1}.

Let $\omega_1$ and $\omega_2$ be two positive functionals on $A$ such that $\pi_{\omega_1}$ and $\pi_{\omega_2}$ are disjoint representations. We denote this by $\omega_1 \dot{\perp} \omega_2$. Now we define \textit{decomposable} measures.

\begin{definition} \label{dmes}
Let $\omega\in S(A)$ and $\mu$ be a maximal orthogonal measure such that $\omega$ is the barycenter of $\mu$. Then, call $\mu$ decomposable if $\clL_\mu$ is decomposable and $$\omega_{E_{\omega^\prime}} \dot{\perp} \omega_{X\backslash E_{\omega^\prime}}$$ for any $E_{\omega^\prime} \in \{E_{\omega^\prime}\}_{\omega^\prime \in S(A)}$ (which is a measurable family in that case).
\end{definition}

\begin{remark}
For the measure $\mu$ in Definition \ref{dmes} to be disjoint, the subalgebra $\clL_\mu$ needs to be subcentral. In contrast, our definition of decomposable measure requires $\clL_\mu$ to be decomposable as in Definition \ref{dcmpL} \textit{and} only the distinguished family of projections $\{P_{E_{\omega^\prime}}:=\chi_{E_{\omega^\prime}}\}$ corresponding to the measurable subsets $\{E_{\omega^\prime}\}_{\omega^\prime \in S(A)}$ to be in the center of $\pi_\omega(A)^{\prime\prime}$.
\end{remark}

The main theorem of this section is given below.

\begin{theorem} \label{mt2}
Let $\omega\in S(A)$. Then the GNS representation $\pi_\omega$ admits a decomposable weak expectation if and only if there exist a maximal orthogonal decomposable measure $\mu$ with $\omega$ as its barycenter.
\end{theorem}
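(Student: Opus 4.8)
The plan is to establish Theorem~\ref{mt2} by reducing it to Theorem~\ref{mt1} via Effros' disintegration theorem (Theorem~\ref{eff}), which provides the dictionary between maximal orthogonal measures $\mu$ on $S(A)$ with barycenter $\omega$ and maximal abelian von-Neumann subalgebras $\clL_\mu$ of $\pi_\omega(A)^{\prime}$. The theorem being an ``if and only if'', I would organize the argument around the two implications, with the key technical observation being that the conditions defining a \emph{decomposable measure} in Definition~\ref{dmes} are precisely the state-space translation of the hypotheses of Theorem~\ref{mt1} for the GNS representation $\pi_\omega$.

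For the reverse direction, suppose a maximal orthogonal decomposable measure $\mu$ with barycenter $\omega$ exists. By Theorem~\ref{eff}, $\pi_\omega$ disintegrates over $X={\overline{PS(A)}}^{wk^*}$ with respect to the maximal abelian $\clL_\mu$, and by Corollary~\ref{max} the fibers $\pi_{\omega^\prime}$ are irreducible $\mu$-a.e.\ (indeed supported on $PS(A)$). The decomposability of $\mu$ gives, by definition, that $\clL_\mu$ is decomposable in the sense of Definition~\ref{dcmpL}. It then remains to verify the hypothesis $\clP_{\clL_\mu}\subset\pi_\omega(A)^{\prime\prime}$ of Theorem~\ref{mt1}. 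This is exactly where the second clause $\omega_{E_{\omega^\prime}}\dot{\perp}\omega_{X\backslash E_{\omega^\prime}}$ enters: recalling from the discussion preceding Definition~\ref{dmes} that $P_{E_{\omega^\prime}}=\chi_{E_{\omega^\prime}}$ (up to unitary identification) and that disjointness of $\pi_{\omega_{E_{\omega^\prime}}}$ and $\pi_{\omega_{X\backslash E_{\omega^\prime}}}$ is equivalent to $P_{E_{\omega^\prime}}$ lying in the center, hence in $\pi_\omega(A)^{\prime\prime}$, I would conclude that each distinguished projection $P_{E_{\omega^\prime}}\in\clP_{\clL_\mu}$ is in $\pi_\omega(A)^{\prime\prime}$. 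An application of Theorem~\ref{mt1} then produces a decomposable weak expectation of $\pi_\omega$.

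For the forward direction, suppose $\pi_\omega$ admits a decomposable weak expectation. By Definition~\ref{dwe} this presupposes that $\pi_\omega(A)^{\prime}$ carries a decomposable maximal abelian $\clL$; by the separability of $A$ and Theorem~\ref{eff} this $\clL$ corresponds to a unique maximal orthogonal measure $\mu$ on $S(A)$ with barycenter $\omega$, and maximality of $\clL$ forces $\mu$ to be maximal (supported on $PS(A)$). The decomposability of $\clL=\clL_\mu$ immediately yields the first requirement of Definition~\ref{dmes}. For the second, the existence of the decomposable weak expectation forces $\clP_{\clL_\mu}\subset\pi_\omega(A)^{\prime\prime}$ by the necessity half of Theorem~\ref{mt1}; translating back through the identification $P_{E_{\omega^\prime}}=\chi_{E_{\omega^\prime}}$, membership of each distinguished projection in $\pi_\omega(A)^{\prime\prime}$ (in fact the center) is precisely the disjointness relation $\omega_{E_{\omega^\prime}}\dot{\perp}\omega_{X\backslash E_{\omega^\prime}}$. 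Hence $\mu$ is a maximal orthogonal decomposable measure.

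\textbf{The main obstacle} I anticipate is not any single deep step but the careful bookkeeping of the correspondence between the operator-theoretic objects ($\clL_\mu$, the projections $\clP_{\clL_\mu}$, membership in $\pi_\omega(A)^{\prime\prime}$) and their state-space counterparts (the measure $\mu$, the measurable equivalence classes $E_{\omega^\prime}$, and the disjointness relations), particularly ensuring that the projection $P_{E_{\omega^\prime}}\in\pi_\omega(A)^{\prime\prime}$ being \emph{central} is genuinely equivalent to $\omega_{E_{\omega^\prime}}\dot{\perp}\omega_{X\backslash E_{\omega^\prime}}$ rather than merely to $\omega_{E_{\omega^\prime}}\perp\omega_{X\backslash E_{\omega^\prime}}$. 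Here one must invoke the distinction, emphasized in the remark following Definition~\ref{dmes} and in \cite[Section 4.2.2]{OB1}, between orthogonality (which every orthogonal measure satisfies automatically) and \emph{disjointness} of the corresponding GNS subrepresentations, the latter being exactly the condition that promotes $P_{E_{\omega^\prime}}$ from $\pi_\omega(A)^{\prime}$ into the center of $\pi_\omega(A)^{\prime\prime}$. Getting this equivalence stated cleanly, and confirming that it applies only to the distinguished family $\{P_{E_{\omega^\prime}}\}$ and not to all measurable subsets (which would instead force $\mu$ to be disjoint/subcentral), is the crux of matching Definition~\ref{dmes} to the hypotheses of Theorem~\ref{mt1}.
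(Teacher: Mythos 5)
Your argument is correct and follows exactly the route the paper intends: the paper's own proof is the one-line reduction ``follows from Theorem \ref{mt1} and Definition \ref{dmes}'', and your write-up simply makes explicit the dictionary (via Theorem \ref{eff}) between decomposability of $\mu$ and the hypotheses of Theorem \ref{mt1}, including the key point that $\omega_{E_{\omega^\prime}}\,\dot{\perp}\,\omega_{X\backslash E_{\omega^\prime}}$ is equivalent to $P_{E_{\omega^\prime}}\in\pi_\omega(A)^{\prime\prime}$. No gaps; your version is in fact more detailed than the paper's.
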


\begin{proof}
The proof easily follows from Theorem \ref{mt1} and Definition \ref{dmes}. 
\end{proof}

We conclude this section with the following corollary of Theorem \ref{mt2}, which is a straightforward consequence of Proposition \ref{ueq}. The proof is omitted.

\begin{corollary}
Let $\omega\in S(A)$ and let $\mathcal{U}_\omega$ denote the unitary orbit of $\omega$ in the state space $S(A)$. If $\omega$ admits a decomposable measure then every $\tau \in \mathcal{U}_\omega$ admits a decomposable measure.
\end{corollary}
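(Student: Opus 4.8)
**Verdict: This final Corollary should be provable and I expect the proof to go through essentially as the authors claim (it is "omitted" precisely because it is routine once Proposition \ref{ueq} and Theorem \ref{mt2} are in hand).**

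The plan is to reduce the statement entirely to the machinery already established, using the bridge between states and GNS representations together with the unitary-equivariance result of Proposition \ref{ueq}. First I would unpack the hypothesis: by Theorem \ref{mt2}, the assumption that $\omega$ admits a maximal orthogonal decomposable measure $\mu$ with barycenter $\omega$ is equivalent to the statement that the GNS representation $\pi_\omega$ admits a decomposable weak expectation. So the real content to verify is: if $\pi_\omega$ admits a decomposable weak expectation, and $\tau \in \mathcal{U}_\omega$, then $\pi_\tau$ admits a decomposable weak expectation — after which a second application of Theorem \ref{mt2} translates this back into the existence of a decomposable measure for $\tau$.

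The key step is therefore to observe that membership in the unitary orbit $\mathcal{U}_\omega$ forces unitary equivalence of the GNS representations. Concretely, if $\tau = \omega \circ \mathrm{Ad}(u) = \omega(u^* \cdot u)$ for a unitary $u \in A$ (or more generally $\tau$ lies in the unitary orbit under the action used to define $\mathcal{U}_\omega$), then the standard GNS functoriality gives a unitary $W : H_\tau \to H_\omega$ intertwining $\pi_\tau$ and $\pi_\omega$, i.e. $W \pi_\tau W^* = \pi_\omega$, carrying the cyclic vector $\xi_\tau$ to $\pi_\omega(u)\xi_\omega$. Thus $\pi_\omega$ and $\pi_\tau$ are unitarily equivalent non-degenerate representations of $A$. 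This is the point where the precise definition of $\mathcal{U}_\omega$ matters; I would make explicit that the unitary orbit is taken so as to guarantee GNS unitary equivalence, since that is exactly the hypothesis needed to invoke the next step.

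With unitary equivalence in hand, Proposition \ref{ueq} applies verbatim: it asserts that if $\pi_\omega(A)'$ admits a decomposable maximal abelian von-Neumann subalgebra then so does $\pi_\tau(A)'$, and moreover that if $\pi_\omega$ admits a decomposable weak expectation then so does $\pi_\tau$. Chaining the three facts — Theorem \ref{mt2} (for $\omega$), Proposition \ref{ueq} (transfer of the decomposable weak expectation along the unitary), and Theorem \ref{mt2} again (for $\tau$) — yields that $\tau$ admits a maximal orthogonal decomposable measure with barycenter $\tau$, which is the claim. I anticipate no genuine obstacle here; the only point demanding care is the bookkeeping at the level of the orbit, namely confirming that every $\tau \in \mathcal{U}_\omega$ genuinely produces a GNS representation unitarily equivalent to $\pi_\omega$ rather than merely quasi-equivalent. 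Once that identification is pinned down, the corollary is an immediate composition of previously proved results, which justifies omitting a detailed proof.
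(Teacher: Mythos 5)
Your proposal is correct and matches the paper's intended argument exactly: the paper omits the proof but states that the corollary follows from Theorem \ref{mt2} combined with Proposition \ref{ueq}, which is precisely the chain you describe (decomposable measure for $\omega$ $\Rightarrow$ decomposable weak expectation for $\pi_\omega$ $\Rightarrow$, via unitary equivalence of GNS representations of unitarily conjugate states and Proposition \ref{ueq}, decomposable weak expectation for $\pi_\tau$ $\Rightarrow$ decomposable measure for $\tau$). Your extra care in verifying that $\tau=\omega(u^*\cdot u)$ yields genuine unitary equivalence of GNS representations, not merely quasi-equivalence, is the only nontrivial checkpoint and you handle it correctly.
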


\section{An Example of a Non-degenerate Representation Admitting a Decomposable Weak Expectation}
In this section, we will see a prototype to construct an example of a representation of a unital separable C*-algebra $A$ which has a decomposable weak expectation. Suppose there exists a unital, separable C*-algebra $A$ with a sequence $(a_n)_{n \in \mathbb{N}}$ of elements of $A$ and a sequence of irreducible representations $(\pi_n)_{n \in \mathbb{N}}$ of $A$ on Hilbert space $(H_n)_{n \in \mathbb{N}}$ satisfying the following two conditions:

\begin{enumerate}
\item $a_n \neq a_m$ if $n \neq m$;
\item $\pi_n(a_m) = \delta_{n,m}$ 
\end{enumerate}

We claim that there exists a representation of $A$ which admits a decomposable weak expectation. We construct such a representation as follows: Consider the interval $[0, 1]$ with the usual Lebesgue measure, say $\mu$. Let $E_n = (\frac{1}{n-1}, \frac{1}{n}]$. Then each $E_n$ is a measurable set with $\mu(E_n) > 0$ and we get a countable measurable partition of $[0, 1]$ as $\coprod_{n \in \mathbb{N}} E_n \cup \{ 0 \}$. Now consider a decomposable representation 
\begin{equation*}
\pi := \di_{[0, 1]} \pi_p~ {\rm d}\mu(p) : A \longrightarrow \di_{[0, 1]} B(H_p) ~ {\rm d}\mu(p)
\end{equation*}
where $\pi_p = \pi_n$ and $H_p = H_n$ for $p \in E_n$. As $\pi_p$ is irreducible for almost all $p \in [0, 1]$, using Corollary \ref{max} we get the diagonalizable algebra $L^\infty \left ([0, 1], \mu \right )$ is a maximal abelian subalgebra of $\acomm$. Also from the construction of $\pi$ and by Definition \ref{dcmpL} it is clear that the maximal abelian subalgebra $L^\infty \left ([0, 1], \mu \right )$ is a decomposable maximal abelian subalgebra of $\acomm$. From the definition of $\pi$ we have 
\begin{equation*}
\pi(a_n) = \chi_{E_n}.
\end{equation*} 
Since $\chi_{E_n} \in \adcom$ for all $n \in \mathbb{N}$, by an application of Theorem \ref{mt1} we get a decomposable weak expectation of $\pi.$

Using the method described above, we construct an example of a non-degenerate representation of $\cfinfty$ admitting a decomposable weak expectation. 

\begin{example}
The full group C*-algebra $\cfinfty$ has a representation which admits a decomposable weak expectation.
\end{example}
\begin{proof}
We get the required representation of the C*-algebra $\cfinfty$ by factoring the $\cfinfty$ through the C*-algebra $C[0, 1]$ (C*-algebra of all continuous complex valued functions on $[0, 1]$). First we construct a family of irreducible representations of $C[0, 1]$ as follows: For $n \in \mathbb{N}$, define a pure state $\omega_{n} : C[0, 1] \rightarrow \mathbb{C}$ by $\omega_{n}(f) := f \left (\frac{1}{n} \right )$ for $f \in C[0, 1]$. Corresponding to $\omega_{n}$ we have the GNS triple $(\pi_{\omega_{n}}, H_{\omega_{n}}, \Omega_{\omega_{n}})$ and as $\omega_{n}$ is a pure state the representation $\pi_{\omega_{n}}$ is irreducible. Consider
\begin{eqnarray*}
N_{\omega_{n}} &=& \{f \in C[0, 1] \, : \, \omega_{n}(f^*f) = 0 \} \\
&=& \{f \in C[0, 1] \, : \, f^*f 
\left (\frac{1}{n} \right ) = 0 \} \\
&=& \{f \in C[0, 1] \, : \, f \left (\frac{1}{n} \right ) = 0 \}.
\end{eqnarray*} 
Then $N_{\omega_{n}}$ is a closed left ideal of $C[0, 1]$ and the map
\begin{eqnarray*}
(C[0, 1] / N_{\omega_{n}})^2 &\rightarrow& \mathbb{C} \\
(f + N_{\omega_{n}}, g + N_{\omega_{n}}) &:=& \omega_{n}(g^*f)
\end{eqnarray*}
is an inner product on $C[0, 1] / N_{\omega_{n}}$. The Hilbert space $H_{\omega_{n}}$ is the completion of $C[0, 1] / N_{\omega_{n}}$.

Construct a sequence functions $(f_n)_n$ in $C[0, 1]$ such that
\begin{equation*}
f_n \left (\frac{1}{m} \right ) = \delta_{n,m}.
\end{equation*}
We have a sequence of functions $(f_n)_n$ and a sequence of irreducible representations $(\pi_{\omega_{n}})_n$ of $C[0, 1]$ on $(H_{\omega_{n}})_n$. 
	
For any $g \in C[0, 1]$, 
\begin{equation*}
\pi_{\omega_{n}}(f_n)(g + N_{\omega_{n}}) = f_ng + N_{\omega_{n}}.
\end{equation*}
Since $(g-f_ng) \left (\frac{1}{n} \right ) = 0$, we get $g + N_{\omega_{n}} = f_ng + N_{\omega_{n}}$ and hence 
\begin{equation*}
\pi_{\omega_{n}}(f_n) = \mathrm{Id}_{H_{\omega_{n}}}.
\end{equation*}
Whereas if $n \neq m,$ then
\begin{equation*}
\pi_{\omega_{n}}(f_m)(g + N_{\omega_{n}}) = f_mg + N_{\omega_{n}}.
\end{equation*}
Since $f_mg\left (\frac{1}{n} \right ) = 0$, we get $f_mg + N_{\omega_{n}} = N_{\omega_{n}}$ and hence $\pi_{\omega_{n}}(f_m) = \mathrm{0}_{H_{\omega_{n}}}$. This implies
\begin{equation*}
\pi_{\omega_{n}}(f_m) = 0
\end{equation*} 

Let $\widetilde{\rho} : \cfinfty \rightarrow C[0, 1]$ be a surjective *-homomorphism (obtained by the universal property) and $(a_n)_n$ be a sequence of elements of $\cfinfty$ such that $\widetilde{\rho}(a_n) = f_n.$ Define an irreducible representation of $\cfinfty$ as 
\begin{equation*}
\rho_n := \pi_{\omega_{n}} \circ \widetilde{\rho} : \cfinfty \rightarrow B(H_{\omega_{n}}).
\end{equation*} 
Observe $\rho_n(a_m) = \pi_{\omega_{n}}(f_m) = \delta_{n,m}$.

Now consider the interval $[0, 1]$ with the usual Lebesgue measure $\mu$ and $E_n = (\frac{1}{n-1}, \frac{1}{n}]$ as defined earlier in this section. Define a decomposable representation of $\cfinfty$ as
\begin{equation*}
\rho := \di_{[0, 1]} \rho_p ~ {\rm d}\mu(p) : \cfinfty \longrightarrow \di_{[0, 1]} B(H_p) ~ {\rm d}\mu(p)
\end{equation*}
where $\rho_p = \rho_n$ and $H_p = H_{\omega_{n}}$ for $p \in E_n$. By using Corollary \ref{max}, Definition \ref{dcmpL} and irreducibilty of $\rho_p$ for almost all $p \in [0, 1]$, we get the diagonalizable algebra $L^\infty \left ([0, 1], \mu \right )$ is a decomposable maximal abelian subalgebra of $\rho(\cfinfty)^{\prime}$. Moreover, we have 
\begin{equation*}
\rho(a_n) = \chi_{E_n}.
\end{equation*} 
Hence $\chi_{E_n}$ belongs to $\rho(\cfinfty)^{\prime \prime}$ for all $n \in \mathbb{N}$. Then Theorem \ref{mt1} implies that the representation $\rho$ admits a decomposable weak expectation.

Now we take the help of Proposition \ref{intwep} to construct a decomposable weak expectation of $\rho$. Moreover, the construction gives not just a one but a family of decomposable weak expectations of $\rho$ as demonstrated below. We have
\begin{equation*}
\rho = \di_{[0, 1]} \rho_p ~ {\rm d}\mu(p) : \cfinfty \longrightarrow \di_{[0, 1]} B(H_p) ~ {\rm d}\mu(p)
\end{equation*}
is a non-degenerate representation of $\cfinfty$ where, $\rho_p = \rho_n$ and $H_p = H_{\omega_{n}}$ for $p \in E_n$. Since each $\rho_n$ is irreducible, by Arveson's extension we get a weak expectation of $\rho_n$ given by $\phi_n: B(H_{\omega_{n}}) \rightarrow \rho_n(\cfinfty)^{\prime\prime}.$ Now, consider the completely positive map 
\begin{equation*}
\tau_n \otimes {\rm id}: L^\infty (E_n, \mu) \bar{\otimes} B(H_{\omega_{n}}) \rightarrow B(H_{\omega_{n}}),
\end{equation*} 
where $\tau_n \in L^1(E_n,\mu)$ is a state. Let 
\begin{equation*}
\Phi_n : \di_{E_n} B(H_p) ~ {\rm d}\mu(p) \rightarrow \left ( \di_{E_n} \rho_p ~ {\rm d}\mu(p) \cfinfty \right)^{\prime \prime}
\end{equation*}  
be a unital completely positive map given by 
\begin{equation*}
T \mapsto \di_{E_n} (\phi_n \circ (\tau_n \otimes {\rm id}))(T)~ {\rm d}\mu(p)
\end{equation*}
for $T \in \di_{E_n} B(H_p) ~ {\rm d}\mu(p)$.
	
Then the unital completely positive map defined by
\begin{equation*}
\Phi_\rho := \prod_{n} \Phi_n : \di_{[0, 1]} B(H_p) ~ {\rm d}\mu(p) \rightarrow \rho \left ( \cfinfty \right)^{\prime \prime}
\end{equation*} 
is a decomposable weak expectation of $\rho$. 

The map $\Phi_\rho$ depends on the choice of states $\tau_n \in L^1(E_n,\mu)$ and the choice of weak expectations $\phi_n$ of $\rho_n.$ Therefore, by varying $\tau_n$ and $\phi_n$ we get a family of decomposable weak expectations of $\rho$ indexed by the states $\tau_n$ and an Arveson's extension of $\rho_n$. 
\end{proof}

%%%%%%%%%%%%%%%%%%%%%%%%%%% bibliography %%%%%%%%%%%%%%%%%%%%%%%%%
%\newpage

\section*{Acknowledgement} 
The first named author is partially supported by Science and Engineering Board (DST, Govt. of India) grant no. ECR/2018/000016 and the second named author is supported by CSIR PhD scholarship award letter no. 09/1020(0142)/2019-EMR-I.

\subsection*{Declaration}
The authors declare that there is no conflict of interest.

\end{document}